\DeclareFontFamily{U}{euf}{}
\DeclareFontShape{U}{euf}{m}{n}{%
  <5><6><7><8><9>gen*eufm%
  <10><10.95><12><14.4><17.28><20.74><24.88>eufm10%
  }{}
\DeclareFontShape{U}{euf}{b}{n}{%
  <5><6><7><8><9>gen*eufb%
  <10><10.95><12><14.4><17.28><20.74><24.88>eufb10%
  }{}
\DeclareFontFamily{U}{msb}{}
\DeclareFontShape{U}{msb}{m}{n}{%
  <5><6><7><8><9>gen*msbm%
  <10><10.95><12><14.4><17.28><20.74><24.88>msbm10%
  }{}
\DeclareFontFamily{U}{msa}{}
\DeclareFontShape{U}{msa}{m}{n}{%
  <5><6><7><8><9>gen*msam%
  <10><10.95><12><14.4><17.28><20.74><24.88>msam10%
  }{}
\newtheorem{theorem}{Theorem}[section]
\newtheorem{lemma}[theorem]{Lemma}
\newtheorem{corollary}[theorem]{Corollary}
\theoremstyle{definition}
\newtheorem{remark}[theorem]{Remark}
\numberwithin{equation}{section}
\numberwithin{equation}{section} \frenchspacing
\begin{document}

\title[]{Appell-Carlitz numbers}

\author{Su Hu}
\address{Department of Mathematics, South China University of Technology, Guangzhou 510640, China}
\email{mahusu@scut.edu.cn}

\author{Min-Soo Kim}
\address{Department of Mathematics Education, Kyungnam University, Changwon, Gyeongnam 51767, Republic of Korea}
\email{mskim@kyungnam.ac.kr}

%\thanks{This work was financially supported by KRF 2003-041-C20009}

\subjclass[2010]{11R58, 11R60, 11B68}
\keywords{Appell-Carlitz numbers, explicit expressions, determinants, recurrence relations}

\maketitle

\begin{abstract}
In this paper, we introduce the concept of the (higher order) Appell-Carlitz numbers
which unifies the definitions of several special numbers in positive characteristic,
such as the Bernoulli-Carlitz numbers and the Cauchy-Carlitz numbers.
Their generating function
is named Hurwitz series in the function field arithmetic (\cite[p. 352, Def. 9.1.4]{Go3}). By using Hasse-Teichm\"uller derivatives, we also obtain several properties of the (higher order) Appell-Carlitz numbers,
including a recurrence formula, two closed forms expressions, and a determinant expression.

The recurrence formula implies Carlitz's recurrence formula for Bernoulli-Carlitz numbers.
Two closed from expressions implies the corresponding results for Bernoulli-Carlitz and Cauchy-Carlitz numbers .
The determinant expression implies the  corresponding results for Bernoulli-Carlitz and Cauchy-Carlitz numbers,
which are analogues of the classical determinant expressions of Bernoulli
and Cauchy numbers stated in an article by Glaisher in 1875.
\end{abstract}

\section{Introduction}\label{intro}

The Bernoulli numbers $B_{n}\in\mathbb{Q}~ (n=0,1,2,\ldots)$ are defined by the generating function
\begin{equation}\label{Ber}
\frac{t}{e^{t}-1}=\sum_{n=0}^{\infty} B_{n}\frac{t^{n}}{n!}.
\end{equation}
The Bernoulli numbers may also be defined  by the recursive formula
\begin{eqnarray}\label{Ber-recursivef}
B_{0} =1, \quad  B_n
=-\sum_{j=0}^{n-1}\frac{n!}{j!(n+1-j)!}B_{j}\quad \text{for }n \geq 1,
\end{eqnarray}
which can be obtained by comparing the coefficients in the expansion of $t ={(e^t -1)}
\sum_{n=0}^{\infty}\frac{B_n}{n!}t^n.$
The Bernoulli numbers have many applications in modern number theory,
such as the Eisenstein series in modular forms (see \cite{Apostol}), and the arithmetic of algebraic number fields, especially Kummer's notion of
regularity and the  class number of $p$th cyclotomic fields (see \cite[p. 62, Theorem 5.16]{Washington}).

It is well-known that there exist close analogues between the rational number field $\mathbb{Q}$
and the rational function fields $\mathbb{F}_{r}(T)$ over a finite field
$\mathbb{F}_{r}$ (see \cite{Go2}). In 1935, Carlitz~\cite{Carlitz} gave an analogue of Bernoulli numbers for rational function field $\mathbb{F}_{r}(T),$
denoted by $BC_{n}$, which is now known as the Bernoulli-Carlitz numbers. In subsequent works, he also found many interesting properties of them, including the analogue of
the well-known von Staudt--Clausen theorem (see \cite{Carlitz2, Carlitz3} and \cite{Lara}).
The definition of Bernoulli-Carlitz numbers is as follows.
Let $[i] =T^{r^i}-T$, $D_{i}=[i]{[i-1]}^r \cdots {[1]}^{r^{i-1}}$
with $D_{0}=1$. The Carlitz exponential is defined by
$$e_C(z)=\sum_{j=0}^\infty\frac{z^{r^j}}{D_j}.$$
For a nonnegative integer $i$ with $r$-ary expansion
$i=\sum_{j=0}^{m}c_{j}r^{j}~(0\leq c_{j}<r),$
the Carlitz factorial $\Pi(i)$ is defined by
$$\Pi(i)=\prod_{j=0}^{m}D_{j}^{c_j}.$$
In analogue with (\ref{Ber}), the Bernoulli-Carlitz numbers $BC_{n}\in\mathbb{F}_{r}(T)~ (n=0,1,2,\ldots)$
are defined by the generating function
\begin{eqnarray}\label{defofBC}
\frac{z}{e_{C}(z)}=\sum_{n=0}^\infty\frac{BC_n}{\Pi(n)}z^n.
\end{eqnarray}
By comparing the coefficients in the expansion of $z=e_{C}(z)\sum_{n=0}^\infty\frac{BC_n}{\Pi(n)}z^n,$
Carlitz found the following recursive formula of the Bernoulli-Carlitz numbers $BC_{n}$
which are  analogues of (\ref{Ber-recursivef})
\begin{equation}\label{Ber-Car-recursivef}
BC_0=1, \quad BC_{n}=-\sum_{j=1}^{[\log_{r}(n+1)]}
\frac{\Pi(n)}{\Pi(r^j) \Pi(n+1-r^j)}BC_{n+1-r^j} \quad \text{for }n \geq 1,
\end{equation}
where $[\cdot]$ is the greatest integer function. As in the classical case, the Bernoulli-Carlitz numbers  have many deep connections with the arithmetic of function fields, especially  the class
groups of cyclotomic function fields (see \cite[Sec. 9.2]{Go3}, \cite[Sec. 5.2]{Thakur} or \cite{Gekeler1, Gekeler2}).

For $\ell\in\mathbb{N},$ in 1924, N\"orlund \cite{Norlund} defined the higher order Bernoulli numbers $B_{n}^{(\ell)}\in\mathbb{Q}~ (n=0,1,2,\ldots)$
by the generating function
\begin{equation}\label{Ber-h}
\left(\frac{t}{e^{t}-1}\right)^{\ell}=\sum_{n=0}^{\infty} B_{n}^{(\ell)}\frac{t^{n}}{n!},
\end{equation}
and in 2005, Jeong, Kim and Son~\cite{JKS} defined the higher order
Bernoulli-Carlitz numbers $BC_{n}^{(\ell)}\in\mathbb{F}_{r}(T)~ (n=0,1,2,\ldots)$
by the generating function
\begin{eqnarray}\label{BC-h}
\left(\frac{z}{e_{C}(z)}\right)^{\ell}=\sum_{n=0}^\infty\frac{BC_{n}^{(\ell)}}{\Pi(n)}z^n.
\end{eqnarray}
Letting $l=1$ in (\ref{Ber-h}) and (\ref{BC-h}),
we recover the Bernoulli numbers $B_{n}$ and the Bernoulli-Carlitz numbers
$BC_{n},$ respectively.

The classical Cauchy numbers $c_{n}\in\mathbb{Q}~ (n=0,1,2,\ldots)$ are defined by the generating function
\begin{equation}\label{Cauchy}
\frac{t}{\log(1+t)}=\sum_{n=0}^{\infty} c_{n}\frac{t^{n}}{n!}
\end{equation} (see \cite{Glaisher}).

Let $L_{i}=[i][i-1]\cdots[1]~(i\geq 1)$ with $L_{0}=1,$ let
\begin{equation}\label{carlitzlog}
\log_C(z)=\sum_{i=0}^\infty(-1)^i\frac{z^{r^i}}{L_i}
\end{equation}
be the Carlitz logarithm.
In 2016, Kaneko and Komatsu~\cite{KK} defined the Cauchy-Carlitz numbers $CC_{n}~(n=0,1,2,\ldots)$ by the generating function
\begin{equation}\label{caucarlitz}
\frac{z}{\log_C(z)}=\sum_{n=0}^\infty\frac{CC_n}{\Pi(n)}z^n
\end{equation}
(see \cite[p. 240, (12)]{KK}),
and for $\ell\in\mathbb{N},$ they also defined the higher order Cauchy-Carlitz numbers $CC_{n}^{(\ell)} (n=0,1,2,\ldots)$ by
\begin{eqnarray}\label{CC-h}
\left(\frac{z}{\log_C(z)}\right)^{\ell}=\sum_{n=0}^\infty\frac{CC_{n}^{(\ell)}}{\Pi(n)}z^n
\end{eqnarray}
(see \cite[p. 249, (28)]{KK}).

Recently, in order to unify the definitions of several special numbers in the classical setting
such as the (higher order) Bernoulli numbers and the (higher order) Cauchy numbers, Hu and Komatsu~\cite{HKo}
introduced the concept of the related numbers of higher order Appell polynomials.
Their definition is as follows.

Let $\mathbb{C}$ be the field of complex numbers,
let $S(t)=\sum_{n=0}^{\infty}a_{n}\frac{t^{n}}{n!}$ be any  formal power series in $\mathbb{C}[[t]]$ and $a_{0}\neq 0,$
the Appell polynomials $A_{n}(z)$ are defined by the generating function
\begin{equation}\label{Appell}
S(t)e^{zt}=\sum_{n=0}^{\infty}A_{n}(z)\frac{t^{n}}{n!}
\end{equation}
(see \cite{Appell}).
Since $a_{0}\neq 0,$ there exists the formal power series (for some $d_n\in\mathbb C$)
\begin{equation}\label{ft}
f(t)=\frac{1}{S(t)}=\sum_{n=0}^{\infty}d_{n}\frac{t^{n}}{n!}
\end{equation}
in $\mathbb{C}[[t]]$, and (\ref{Appell}) becomes
\begin{equation}\label{Appell2}
\frac{e^{zt}}{f(t)}=\sum_{n=0}^{\infty}A_{n}(z)\frac{t^{n}}{n!}.
\end{equation}
For $\ell\in\mathbb{N},$ we can also define the higher order Appell polynomials by the generating function
\begin{equation}\label{Appellh}
\frac{e^{zt}}{(f(t))^{\ell}}=\sum_{n=0}^{\infty}A_{n}^{(\ell)}(z)\frac{t^{n}}{n!}
\end{equation}
(see \cite[Th\'eor\`eme 1.1]{BBZ}).
As in the classical case, $a_{n}^{(\ell)}=A_{n}^{(\ell)}(0)$ is defined to be the related numbers of higher order Appell polynomials, that is,
\begin{equation}\label{Appellhn}
\frac{1}{(f(t))^{\ell}}=\sum_{n=0}^{\infty}a_{n}^{(\ell)}\frac{t^{n}}{n!}
\end{equation}
and $a_{n}=a_{n}^{(1)}$ the related numbers of Appell polynomials (see \cite[p. 3, (6)]{HKo}).

In (\ref{Appellhn}), let $f(t)=\frac{e^{t}-1}{t}$ and $\frac{\log(1+t)}{t},$
we obtain the (higher order) Bernoulli numbers and the (higher order) Cauchy numbers, respectively.

To unify the definitions of several special numbers in positive characteristic,
such as the  Bernoulli-Carlitz numbers and the  Cauchy-Carlitz numbers, we here define the Appell-Carlitz numbers to be a sequence $\{AC_{n}\}_{n=0}^{\infty}$ in
$k=\mathbb{F}_{r}(T)$ with a normalization $AC_{0}=1.$
Let $S(z)\in k((z))$ be the generating function of $\{AC_{n}\}_{n=0}^{\infty},$ that is,
\begin{eqnarray}\label{def-Sz}
S(z)=\sum_{n=0}^\infty\frac{AC_n}{\Pi(n)}z^n.
\end{eqnarray}
If $S(z)=\left(\frac{z}{e_C(z)}\right)^\ell$ and $S(z)=\left(\frac{z}{\log_C(z)}\right)^\ell,$ then we obtain the higher order Bernoulli-Carlitz numbers
and the  higher order  Cauchy-Carlitz numbers, respectively. It needs to mention that in Goss's book \cite[p. 352, Def. 9.1.4]{Go3}, the above generating function $S(z)$
is named  as the Hurwitz series.

For $\ell\in\mathbb{N}$, we may also define the higher order Appell-Carlitz numbers $AC_{n}^{(\ell)}(z)~(n=0,1,2,\ldots)$ as the generating function
\begin{eqnarray}\label{def-hapn}
(S(z))^\ell=\sum_{n=0}^\infty\frac{AC_n^{(\ell)}}{\Pi(n)}z^n.
\end{eqnarray}
Denote by $f(z)=\frac1{S(z)},$ we have
\begin{eqnarray}\label{def-hapnf}
\frac{1}{(f(z))^\ell}=\sum_{n=0}^\infty\frac{AC_n^{(\ell)}}{\Pi(n)}z^n,
\end{eqnarray}
which is an analogue of (\ref{Appellhn}) in $\mathbb{F}_{r}(T).$ 
It should be noted that the definitions of the Appell-Carlitz numbers and their higher order
counterparts depend on the series $S(z).$

\section{Main results and their corollaries}\label{Sec.2}

``In mathematics, a closed form is a mathematical expression that can be evaluated
in a finite number of operations. It may contain constants, variables, four arithmetic
operations, and elementary functions, but usually no limit." (See \cite[p. 91]{Qi2016}).
During the recent years, there are many results concerning closed form expressions for special numbers and polynomials in characteristic 0 case, such as Bernoulli, Euler, Cauchy, Apostol--Bernoulli,
hypergeometric Bernoulli numbers and polynomials, see \cite{CK, Dagali, HK1, HKo, Qi2016} and the references  therein.

In this paper, we shall address our attention to the characteristic $p$ case and  obtain several properties of the (higher order) Appell-Carlitz numbers,
including a recurrence formula, two closed form expressions, a determinant expression. The recurrence formula (Theorem \ref{Theorem1}) implies Carlitz's recurrence formula for Bernoulli-Carlitz numbers
(see (\ref{Ber-Car-recursivef}) above). Two closed from expressions (Theorems \ref{Theorem2} and \ref{Theorem3})
implies the corresponding results for Bernoulli-Carlitz and Cauchy-Carlitz numbers (see Corollaries \ref{Corollary2}, \ref{Corollary3},
\ref{Corollary4} and \ref{Corollary5} below).
The determinant expression (Theorem \ref{Theorem4}) implies the  corresponding results
for Bernoulli-Carlitz and Cauchy-Carlitz numbers (see Corollaries \ref{Corollary6} and \ref{Corollary7} below).

Suppose that $f(z)=\frac1{S(z)}$ has the following power series expansion
\begin{eqnarray}\label{def-f(z)}
f(z)=\sum_{n=0}^\infty\lambda_n z^n.
\end{eqnarray}
Then we have the following recurrence formula for the higher order Appell-Carlitz numbers.

\begin{theorem}[Recurrence formula for higher order Appell-Carlitz numbers]\label{Theorem1}
$$AC_{m}^{(\ell)}=-\Pi(m)\sum_{i=0}^{m-1}\frac{AC_{i}^{(\ell)}}{\Pi(i)}D_\ell(m-i)$$
with $AC_{0}^{(\ell)}=1,$ where
\begin{equation}\label{D-ell}
D_\ell(e)=\sum_{i_1+\cdots+i_\ell=e\atop i_1,\dots,i_\ell\ge 0}\lambda_{i_1}\cdots\lambda_{i_\ell}.
\end{equation}
\end{theorem}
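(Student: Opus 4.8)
The plan is to derive everything from a single identity of formal power series, namely $(f(z))^\ell\,(S(z))^\ell=1$. First I would record the two normalizations. Since $0=\sum_j c_j r^j$ forces all $c_j=0$, we have $\Pi(0)=1$, so the defining hypothesis $AC_0=1$ gives $S(0)=AC_0/\Pi(0)=1$; hence $f(z)=1/S(z)$ is an honest element of $k[[z]]$ with $f(z)S(z)=1$ and, in the notation of $(\ref{def-f(z)})$, $\lambda_0=f(0)=1$. Raising $f$ to the $\ell$-th power and grouping the monomials by total degree gives
\[
(f(z))^\ell=\Bigl(\sum_{n\ge 0}\lambda_n z^n\Bigr)^{\!\ell}=\sum_{e\ge 0}D_\ell(e)\,z^e ,
\]
with $D_\ell(e)$ exactly the sum displayed in $(\ref{D-ell})$; in particular $D_\ell(0)=\lambda_0^{\ell}=1$.

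Next I would multiply this expansion by the defining series $(\ref{def-hapnf})$. Because $f(z)S(z)=1$ we have $(f(z))^\ell(S(z))^\ell=1$, that is,
\[
\Bigl(\sum_{e\ge 0}D_\ell(e)z^e\Bigr)\Bigl(\sum_{n\ge 0}\frac{AC_n^{(\ell)}}{\Pi(n)}z^n\Bigr)=1 .
\]
Comparing the constant terms yields $D_\ell(0)\,AC_0^{(\ell)}/\Pi(0)=1$, hence $AC_0^{(\ell)}=1$. For $m\ge 1$, comparing the coefficient of $z^m$ and re-indexing the convolution gives $\sum_{i=0}^{m}D_\ell(m-i)\frac{AC_i^{(\ell)}}{\Pi(i)}=0$; separating the term $i=m$, for which $D_\ell(0)=1$, and multiplying through by $\Pi(m)$ produces
\[
AC_m^{(\ell)}=-\Pi(m)\sum_{i=0}^{m-1}\frac{AC_i^{(\ell)}}{\Pi(i)}\,D_\ell(m-i),
\]
which is the asserted recurrence.

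There is essentially no obstacle beyond careful bookkeeping; the one place where the normalization $AC_0=1$ (equivalently $\lambda_0=D_\ell(0)=1$) is genuinely used is in isolating the $i=m$ term of the convolution, and—unlike the later theorems—this result needs no Hasse--Teichm\"uller derivative. As a sanity check I would then specialize $\ell=1$ and $f(z)=e_C(z)/z$: here $\lambda_i=1/D_j$ when $i=r^j-1$ and $\lambda_i=0$ otherwise, so $D_1(m-i)=\lambda_{m-i}$ is nonzero only for $i=m+1-r^j$ with $1\le j\le[\log_r(m+1)]$, and using $\Pi(r^j)=D_j$ the recurrence collapses precisely to Carlitz's formula $(\ref{Ber-Car-recursivef})$ for the Bernoulli--Carlitz numbers.
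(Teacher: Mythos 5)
Your proof is correct and follows essentially the same route as the paper: the paper's Lemma~\ref{prp0h-r} is exactly your convolution identity $\sum_{i=0}^{m}D_\ell(m-i)\,AC_i^{(\ell)}/\Pi(i)=0$, obtained there by applying $H^{(m)}$ to $(f(z))^\ell(S(z))^\ell=1$ and invoking the product rule, which at $z=0$ is nothing but the Cauchy-product coefficient comparison you perform directly. Your observation that the Hasse--Teichm\"uller formalism is dispensable here (and your explicit verification of $\lambda_0=D_\ell(0)=1$, which the paper leaves implicit when isolating the $i=m$ term) is a harmless and slightly cleaner rephrasing of the same argument.
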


Letting $\ell=1$ in the above result, we get a recurrence formula for Appell-Carlitz numbers.

\begin{corollary}[Recurrence formula for Appell-Carlitz numbers]\label{Corollary1}
$$
AC_{m}=-\Pi(m)\sum_{i=0}^{m-1}\frac{AC_{i}}{\Pi(i)}\lambda_{m-i}.
$$
\end{corollary}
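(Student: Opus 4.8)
The plan is to start from the defining identity \eqref{def-hapnf}, namely $(S(z))^\ell = 1/(f(z))^\ell = \sum_{n\ge 0}\frac{AC_n^{(\ell)}}{\Pi(n)}z^n$, and rewrite it as the single functional equation
\begin{equation*}
1 = (f(z))^\ell\sum_{n=0}^\infty\frac{AC_n^{(\ell)}}{\Pi(n)}z^n.
\end{equation*}
First I would compute the power series expansion of $(f(z))^\ell$ from \eqref{def-f(z)}: since $f(z)=\sum_{n\ge 0}\lambda_n z^n$, taking the $\ell$-th power and collecting the coefficient of $z^e$ gives exactly $(f(z))^\ell = \sum_{e=0}^\infty D_\ell(e)z^e$ with $D_\ell(e)$ as in \eqref{D-ell}. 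This is just the multinomial expansion, so it is routine; note $D_\ell(0)=\lambda_0^\ell$. Because $S(0)=AC_0=1$ we have $f(0)=1$, hence $\lambda_0=1$ and $D_\ell(0)=1$.

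Next I would multiply the two power series $\sum_e D_\ell(e)z^e$ and $\sum_n \frac{AC_n^{(\ell)}}{\Pi(n)}z^n$ using the Cauchy product and compare coefficients of $z^m$ on both sides of $1 = (f(z))^\ell\sum_{n\ge 0}\frac{AC_n^{(\ell)}}{\Pi(n)}z^n$. For $m=0$ this gives $D_\ell(0)\cdot AC_0^{(\ell)} = 1$, so $AC_0^{(\ell)}=1$ as claimed. For $m\ge 1$ the left side contributes $0$, so
\begin{equation*}
0 = \sum_{i=0}^m \frac{AC_i^{(\ell)}}{\Pi(i)}\,D_\ell(m-i)
  = \frac{AC_m^{(\ell)}}{\Pi(m)} + \sum_{i=0}^{m-1}\frac{AC_i^{(\ell)}}{\Pi(i)}\,D_\ell(m-i),
\end{equation*}
using $D_\ell(0)=1$ to split off the $i=m$ term. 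Solving for $AC_m^{(\ell)}$ and multiplying through by $\Pi(m)$ yields the stated recurrence.

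The main thing to be careful about—rather than a genuine obstacle—is that all of this manipulation takes place in the field of formal Laurent series $k((z))$ over $k=\F_r(T)$, where $S(z)\in k((z))$; I should check that $f(z)=1/S(z)$ is a genuine power series (which holds because the constant term $AC_0=1$ is a unit) so that the expansion \eqref{def-f(z)} is valid and the Cauchy product is legitimate term-by-term. One also needs that $\Pi(n)\ne 0$ in $k$ for every $n$, which is standard for the Carlitz factorial, so that dividing by $\Pi(m)$ and by $\Pi(i)$ is permissible. Once these formalities are in place, the coefficient comparison is purely formal and the proof is complete; Corollary \ref{Corollary1} then follows immediately by setting $\ell=1$, since $D_1(e)=\lambda_e$.
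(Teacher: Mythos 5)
Your proposal is correct and is in substance the same argument as the paper's: both extract the coefficient of $z^m$ from the identity $1=(f(z))^\ell (S(z))^\ell$ and split off the $i=m$ term using $\lambda_0=1$. The only difference is packaging — the paper performs the coefficient extraction by applying the Hasse--Teichm\"uller derivative $H^{(m)}$ at $z=0$ together with its product rule (Lemma \ref{productrule2}), which when evaluated at $z=0$ is exactly your Cauchy-product computation.
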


\begin{remark}\label{Remark1}
In the case of Bernoulli-Carlitz numbers,
we have $$f(z)=\frac{e_{C}(z)}{z}=\sum_{j=0}^{\infty}\frac{z^{r^{j}-1}}{D_{j}}.$$
Define
\begin{equation}\label{(8)}
\delta_e^*=\begin{cases}
\frac1{D_n} &\text{if $e=r^n-1$ for some }n \\
0 &\text{if $e\neq r^n-1$ for any }n,
\end{cases}
\end{equation}
then comparing with (\ref{def-f(z)}), we have
\begin{equation}\label{(10)}\lambda_{j}=\delta_{j}^{*}\end{equation} for $j=0,1,2,\ldots$.
By Corollary \ref{Corollary1} and (\ref{(10)}), we obtain Carlitz's recurrence formula for Bernoulli-Carlitz numbers (see (\ref{Ber-Car-recursivef}) above)
\begin{equation}
\begin{aligned}
BC_{m}&=-\Pi(m)\sum_{i=0}^{m-1}\frac{BC_{i}}{\Pi(i)}\delta_{m-i}^{*}\\
&=-\Pi(m)\sum_{i=0}^{m-1}\frac{BC_{m-i}}{\Pi(m-i)}\delta_{i}^{*}\\
&=-\Pi(m)\sum_{j=0}^{[\log_{r}(m+1)]}\frac{BC_{m+1-r^{j}}}{\Pi(m+1-r^{j})}\delta_{r^{j}-1}^{*}\\
&=-\sum_{j=1}^{[\log_{r}(m+1)]}
\frac{\Pi(m)}{\Pi(r^j) \Pi(m+1-r^j)}BC_{m+1-r^j},
\end{aligned}
\end{equation}
since $D_{j}=\Pi(r^{j}),$ for $m\geq1.$
\end{remark}

We also have a closed form expression for  the higher order Appell-Carlitz numbers.

\begin{theorem}[Closed form expression for  higher order Appell-Carlitz numbers]\label{Theorem2}
For $m\ge 1$, we have
$$
AC_{m}^{(\ell)}=\Pi(m)\sum_{k=1}^m (-1)^{k}\sum_{e_1+\cdots+e_k=m\atop e_1,\dots,e_k\ge1}D_\ell(e_1)\cdots D_\ell(e_k),
$$
where $D_\ell(e)$ are given in $(\ref{D-ell})$.
\end{theorem}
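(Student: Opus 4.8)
The plan is to expand $1/(f(z))^{\ell}$ directly as a geometric series in the $z$-adic topology of $k((z))$. First I would note that, since $S(0)=AC_{0}/\Pi(0)=1$, we have $f(0)=1$, so in the expansion (\ref{def-f(z)}) the constant term is $\lambda_{0}=1$. By the very definition (\ref{D-ell}) of $D_{\ell}(e)$, the $\ell$-th power of $f$ has the expansion
$$(f(z))^{\ell}=\sum_{e=0}^{\infty}D_{\ell}(e)z^{e},$$
and $D_{\ell}(0)=\lambda_{0}^{\ell}=1$. Hence, writing $g(z)=(f(z))^{\ell}-1=\sum_{e=1}^{\infty}D_{\ell}(e)z^{e}$, we obtain a formal power series with vanishing constant term.

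Next, because $g(z)$ has no constant term, $g(z)^{k}$ is divisible by $z^{k}$, so the geometric series
$$\frac{1}{(f(z))^{\ell}}=\frac{1}{1+g(z)}=\sum_{k=0}^{\infty}(-1)^{k}g(z)^{k}$$
converges in $k[[z]]$. Expanding each term,
$$\frac{1}{(f(z))^{\ell}}=\sum_{k=0}^{\infty}(-1)^{k}\Bigl(\sum_{e\ge1}D_{\ell}(e)z^{e}\Bigr)^{k}=\sum_{k=0}^{\infty}(-1)^{k}\sum_{e_{1},\dots,e_{k}\ge1}D_{\ell}(e_{1})\cdots D_{\ell}(e_{k})\,z^{e_{1}+\cdots+e_{k}}.$$

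Finally I would read off the coefficient of $z^{m}$ for $m\ge1$. Only terms with $e_{1}+\cdots+e_{k}=m$ contribute, and since each $e_{i}\ge1$ this forces $1\le k\le m$; in particular the $k=0$ term affects only the coefficient of $z^{0}$. Comparing with the defining expansion $\frac{1}{(f(z))^{\ell}}=\sum_{n\ge0}\frac{AC_{n}^{(\ell)}}{\Pi(n)}z^{n}$ from (\ref{def-hapnf}) and multiplying by $\Pi(m)$ yields
$$AC_{m}^{(\ell)}=\Pi(m)\sum_{k=1}^{m}(-1)^{k}\sum_{e_{1}+\cdots+e_{k}=m\atop e_{1},\dots,e_{k}\ge1}D_{\ell}(e_{1})\cdots D_{\ell}(e_{k}),$$
which is the asserted identity. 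There is no genuine difficulty in this argument; the one point deserving a word of justification is that the geometric-series rearrangement is a legitimate identity of formal power series — that is, that the displayed double sum contributes to each fixed power $z^{m}$ only finitely often — which is precisely the bound $k\le m$ used above. Alternatively, one could prove the formula by induction on $m$ starting from the recurrence of Theorem \ref{Theorem1} and unwinding the nested summations, but the geometric-series derivation is more transparent.
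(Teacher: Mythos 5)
Your proof is correct, but it takes a different route from the paper. The paper's proof is built on the Hasse--Teichm\"uller derivative machinery: it sets $h(z)=(f(z))^{\ell}$, computes $H^{(e)}(h)|_{z=0}=D_{\ell}(e)$ by the product rule (Lemma \ref{productrule2}), and then extracts $AC_{m}^{(\ell)}/\Pi(m)=H^{(m)}(1/h)|_{z=0}$ via the quotient rule (\ref{quotientrule1}), using $h(0)=1$ to reduce the factor $(-1)^{k}/h^{k+1}|_{z=0}$ to $(-1)^{k}$. You instead bypass Lemma \ref{quotientrules} entirely and expand $1/(f(z))^{\ell}=1/(1+g(z))$ as a $z$-adically convergent geometric series in $g(z)=(f(z))^{\ell}-1$, then read off the coefficient of $z^{m}$ directly; the observation that only $1\le k\le m$ contributes is exactly the finiteness needed for the rearrangement, and you correctly isolate the one real hypothesis, $\lambda_{0}=f(0)=1$ (forced by $AC_{0}=1$), which the paper uses only implicitly. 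The two computations are of course equivalent --- evaluating the quotient rule at $z=0$ when $h(0)=1$ is precisely coefficient extraction from the geometric series --- so what your version buys is self-containedness and transparency, at the cost of not exercising the Hasse--Teichm\"uller formalism that the paper uses uniformly throughout. It is worth noting that your technique is essentially the one the paper itself deploys for Theorem \ref{Theorem3}, where $S(z)=\sum_{j\ge 0}(-1)^{j}(f(z)-1)^{j}$ appears explicitly; you have in effect applied that idea to $(f(z))^{\ell}$ and skipped the intermediate lemma. No gap.
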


Letting $\ell=1$ in the above result, we have a closed form expression for Appell-Carlitz numbers.

\begin{corollary}[Closed form expression for Appell-Carlitz numbers]\label{Corollary2}
For $m\ge 1$, we have
$$
AC_{m}=\Pi(m)\sum_{k=1}^{m}(-1)^{k}\sum_{e_1+\cdots+e_k=m\atop e_1,\dots,e_k\ge1}\lambda_{e_1}\cdots \lambda_{e_k}.
$$
\end{corollary}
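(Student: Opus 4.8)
The plan is to deduce Corollary~\ref{Corollary2} directly from Theorem~\ref{Theorem2} by specializing $\ell=1$. First I would observe that in the defining sum~(\ref{D-ell}), when $\ell=1$ the only way to write $e=i_1$ with $i_1\ge0$ is $i_1=e$, so $D_1(e)=\lambda_e$ for every $e\ge0$. Substituting $\ell=1$ into the formula of Theorem~\ref{Theorem2} and using $AC_m=AC_m^{(1)}$ then gives exactly the claimed identity
\[
AC_m=\Pi(m)\sum_{k=1}^m(-1)^k\sum_{e_1+\cdots+e_k=m,\ e_1,\dots,e_k\ge1}\lambda_{e_1}\cdots\lambda_{e_k}.
\]

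Alternatively, and more self-containedly, one can argue directly from~(\ref{def-hapnf}) with $\ell=1$, that is, from $1/f(z)=S(z)=\sum_{n\ge0}(AC_n/\Pi(n))z^n$. Since $AC_0=1$ and $\Pi(0)=1$ force $f(0)=1/S(0)=1$, we have $\lambda_0=1$; writing $f(z)=1+g(z)$ with $g(z)=\sum_{n\ge1}\lambda_nz^n$, we may expand $1/f(z)=\sum_{k\ge0}(-1)^kg(z)^k$ as a formal power series. The coefficient of $z^m$ in $g(z)^k$ is $\sum_{e_1+\cdots+e_k=m,\ e_i\ge1}\lambda_{e_1}\cdots\lambda_{e_k}$, which is an empty sum (hence $0$) when $k>m$; comparing the coefficients of $z^m$ on both sides yields
\[
\frac{AC_m}{\Pi(m)}=\sum_{k=0}^m(-1)^k\sum_{e_1+\cdots+e_k=m,\ e_i\ge1}\lambda_{e_1}\cdots\lambda_{e_k},
\]
and for $m\ge1$ the $k=0$ term contributes nothing, which gives the asserted formula after multiplying by $\Pi(m)$.

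There is no serious obstacle here. The only points needing a little care are the normalization $\lambda_0=1$ --- which is exactly what legitimizes the geometric-series expansion of $1/f$ and makes every coefficient a finite sum --- and the trivial bookkeeping of discarding the $k=0$ term when $m\ge1$. Since Theorem~\ref{Theorem2} is already available, the one-line specialization in the first paragraph is the proof I would actually record; the second paragraph also indicates how the same computation underlies Theorem~\ref{Theorem2} itself, with the $\lambda$'s replaced by the coefficients $D_\ell$ of $(f(z))^\ell$ (noting $D_\ell(0)=\lambda_0^\ell=1$, so the expansion of $1/(f(z))^\ell$ proceeds identically).
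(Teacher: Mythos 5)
Your first paragraph is exactly the paper's own argument: the paper derives Corollary~\ref{Corollary2} by setting $\ell=1$ in Theorem~\ref{Theorem2}, where $D_1(e)=\lambda_e$. The proposal is correct (and the self-contained geometric-series alternative is a valid bonus), so nothing further is needed.
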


Then by (\ref{(10)}), we have
$$\begin{aligned}
BC_m&=\Pi(m)\sum_{k=1}^m (-1)^{k}\sum_{e_1+\cdots+e_k=m\atop e_1,\dots,e_k\ge1}\delta_{e_1}^*\cdots \delta_{e_k}^* \\
&=\Pi(m)\sum_{k=1}^m (-1)^{k}\sum_{r^{i_1}+\cdots+r^{i_k}=m+k\atop r^{i_1},\dots,r^{i_k}>1}
\frac1{D_{i_1}}\cdots\frac1{D_{i_k}}.
\end{aligned}$$
Since $D_{i}=\Pi(r^{i})$, we have the following closed form expression for Bernoulli-Carlitz numbers by Jeong, Kim and Son (see \cite[p. 63, Theorem 4.2]{JKS}).

\begin{corollary}[Closed form expression for Bernoulli-Carlitz numbers]\label{Corollary3}
For $m\ge 1$, we have $$BC_m=\Pi(m)\sum_{k=1}^{m} (-1)^{k}\sum_{r^{i_1}+\cdots+r^{i_k}=m+k\atop r^{i_1},\dots,r^{i_k}>1}
\frac1{\Pi(r^{i_1})}\cdots\frac1{\Pi(r^{i_k})}.$$
\end{corollary}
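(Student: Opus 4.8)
The plan is to specialize Corollary~\ref{Corollary2} to the series $f(z)=e_C(z)/z$, which is the one producing the Bernoulli--Carlitz numbers, and then to substitute the values of its Taylor coefficients recorded in Remark~\ref{Remark1}. First I would recall that in this case
$$f(z)=\frac{e_C(z)}{z}=\sum_{j=0}^{\infty}\frac{z^{r^{j}-1}}{D_{j}},$$
so the coefficient $\lambda_j$ from \eqref{def-f(z)} equals $\delta_j^*$ of \eqref{(8)}, namely $1/D_n$ when $j=r^n-1$ for some $n\ge0$ and $0$ otherwise. For this choice of $S(z)$ one has $AC_m=BC_m$, so Corollary~\ref{Corollary2} reads
$$BC_m=\Pi(m)\sum_{k=1}^{m}(-1)^k\sum_{e_1+\cdots+e_k=m\atop e_1,\dots,e_k\ge1}\delta_{e_1}^*\cdots\delta_{e_k}^*.$$

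The main step is a change of the inner summation index. Since $\delta_{e_j}^*$ vanishes unless $e_j=r^{i_j}-1$ for some nonnegative integer $i_j$, only those compositions $(e_1,\dots,e_k)$ all of whose parts have this form contribute, and on such a composition $\delta_{e_1}^*\cdots\delta_{e_k}^*=1/(D_{i_1}\cdots D_{i_k})$. Writing $e_j=r^{i_j}-1$, the condition $e_j\ge1$ becomes $r^{i_j}>1$ and $e_1+\cdots+e_k=m$ becomes $r^{i_1}+\cdots+r^{i_k}=m+k$; since $i\mapsto r^i-1$ is a bijection from $\{i:r^i>1\}$ onto $\{e\ge1:\delta_e^*\ne0\}$, no term is dropped or repeated. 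This gives
$$BC_m=\Pi(m)\sum_{k=1}^{m}(-1)^k\sum_{r^{i_1}+\cdots+r^{i_k}=m+k\atop r^{i_1},\dots,r^{i_k}>1}\frac{1}{D_{i_1}}\cdots\frac{1}{D_{i_k}}.$$

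Finally I would invoke the identity $D_i=\Pi(r^i)$, which is immediate from the definitions because $r^i$ has the one-digit $r$-ary expansion with $c_i=1$; replacing each $D_{i_j}$ by $\Pi(r^{i_j})$ yields exactly the asserted formula, recovering the closed form of Jeong, Kim and Son \cite[p.~63, Theorem~4.2]{JKS}. I expect no genuine obstacle here: the only place needing a moment's care is verifying the bijection used in the middle step, and that is routine.
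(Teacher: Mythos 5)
Your proposal is correct and follows essentially the same route as the paper: specialize Corollary~\ref{Corollary2} with $\lambda_j=\delta_j^*$, reindex the inner sum via $e_j=r^{i_j}-1$ (so $e_j\ge 1$ becomes $r^{i_j}>1$ and the sum condition becomes $r^{i_1}+\cdots+r^{i_k}=m+k$), and finish with $D_i=\Pi(r^i)$. Your explicit justification of the bijectivity of the reindexing is a welcome extra detail that the paper leaves implicit.
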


More generally, from Theorem \ref{Theorem2} and (\ref{(10)}), we may also recover the following closed form
expression for higher order Bernoulli-Carlitz numbers (see \cite[p. 65, Proposition 4.5]{JKS}).
We would like to refer Thakur's book \cite[p. 145, the second last paragraph]{Thakur} on a discussion of this formula.

\begin{corollary}[Closed form expression for higher order Bernoulli-Carlitz numbers]\label{Corollary4}
For $m\ge 1$, we have
$$ BC_{m}^{(\ell)}
=\Pi(m) \sum_{j=1}^{m}(-1)^j  \sum_{\substack{ i_1, \ldots, i_{j} \geq 1 \\
i_1 + \cdots + i_j =m}} M^{(\ell)}(i_1 )\cdots M^{(\ell)}(i_j ),$$
where for each $i,$
$$M^{(\ell)}(i) :=\sum_{\substack{e_1,\ldots, e_{\ell} \geq 0 \\
r^{e_1}  + \cdots + r^{e_\ell} =i}} \frac{1}{\Pi(r^{e_1})\Pi(r^{e_2})
\cdots \Pi(r^{e_\ell}) }.$$
\end{corollary}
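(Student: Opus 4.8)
The approach is to specialize Theorem~\ref{Theorem2} to the series attached to the higher order Bernoulli-Carlitz numbers, in exactly the way that Remark~\ref{Remark1} specialized Corollary~\ref{Corollary1}. First I would take $f(z)=e_C(z)/z=\sum_{j\ge 0}z^{r^j-1}/D_j$; then $1/(f(z))^\ell=(z/e_C(z))^\ell$, so comparing (\ref{BC-h}) with (\ref{def-hapnf}) gives $AC_m^{(\ell)}=BC_m^{(\ell)}$ for every $m$, and by (\ref{(10)}) the coefficients of $f$ are $\lambda_j=\delta_j^{*}$, with $\delta^{*}$ as in (\ref{(8)}).

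Next I would compute the quantity $D_\ell(e)$ of (\ref{D-ell}) after the substitution $\lambda_j=\delta_j^{*}$. Since $\delta_i^{*}$ vanishes unless $i=r^{a}-1$ for some $a\ge 0$, in which case it equals $1/D_a=1/\Pi(r^a)$, every nonzero term of $D_\ell(e)=\sum_{i_1+\cdots+i_\ell=e}\lambda_{i_1}\cdots\lambda_{i_\ell}$ arises by writing $i_t=r^{a_t}-1$ with $a_t\ge 0$; the constraint $i_1+\cdots+i_\ell=e$ then becomes $r^{a_1}+\cdots+r^{a_\ell}=e+\ell$, and the summand becomes $\prod_t 1/\Pi(r^{a_t})$. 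Hence $D_\ell(e)$ is precisely the inner sum $M^{(\ell)}$ appearing in the statement (the $\ell$ units subtracted in $\sum_t(r^{a_t}-1)=e$ fix the argument at which $M^{(\ell)}$ is taken); here one uses $D_a=\Pi(r^a)$, as already exploited in Remark~\ref{Remark1}, to pass from Carlitz $D$'s to Carlitz factorials.

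Finally I would feed this back into Theorem~\ref{Theorem2}: since $AC_m^{(\ell)}=BC_m^{(\ell)}$ it reads
$$BC_m^{(\ell)}=\Pi(m)\sum_{k=1}^m(-1)^k\sum_{\substack{e_1+\cdots+e_k=m\\ e_1,\dots,e_k\ge 1}}D_\ell(e_1)\cdots D_\ell(e_k),$$
and replacing each factor $D_\ell(e_s)$ by the corresponding $M^{(\ell)}$ expression (renaming $k\mapsto j$ and $e_s\mapsto i_s$) produces the claimed formula.

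The only genuine work is the bookkeeping in the middle step: one must check that $i_t\mapsto r^{a_t}-1$ gives a bijection between the support of $\delta^{*}_{i_1}\cdots\delta^{*}_{i_\ell}$ and the tuples $(a_1,\dots,a_\ell)$ of nonnegative integers, keep careful track of the constant shift contributed by the $\ell$ summands $-1$ when identifying the argument of $M^{(\ell)}$, and remember that $\Pi(1)=D_0=1$ so that the terms with $a_t=0$ contribute only trivial factors. Nothing analytic intervenes, since every manipulation takes place among formal series in $k((z))$.
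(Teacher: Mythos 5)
Your route is the one the paper itself intends: the paper gives no separate proof of Corollary \ref{Corollary4}, only the remark that it follows ``from Theorem \ref{Theorem2} and (\ref{(10)})'', and your specialization $\lambda_j=\delta_j^{*}$ followed by the evaluation of $D_\ell(e)$ is exactly that derivation. Your first two steps are sound: $AC_m^{(\ell)}=BC_m^{(\ell)}$, and the nonzero terms of $D_\ell(e)$ correspond bijectively to tuples $(a_1,\dots,a_\ell)$ of nonnegative integers with $r^{a_1}+\cdots+r^{a_\ell}=e+\ell$, so that
$$D_\ell(e)=\sum_{\substack{a_1,\dots,a_\ell\ge 0\\ r^{a_1}+\cdots+r^{a_\ell}=e+\ell}}\frac{1}{\Pi(r^{a_1})\cdots\Pi(r^{a_\ell})}=M^{(\ell)}(e+\ell).$$

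The gap is in your final step. Substituting into Theorem \ref{Theorem2} yields
$$BC_m^{(\ell)}=\Pi(m)\sum_{j=1}^m(-1)^j\sum_{\substack{e_1+\cdots+e_j=m\\ e_1,\dots,e_j\ge 1}}M^{(\ell)}(e_1+\ell)\cdots M^{(\ell)}(e_j+\ell),$$
and ``renaming $e_s\mapsto i_s$'' does not convert this into the displayed statement, in which the arguments of $M^{(\ell)}$ themselves sum to $m$: putting $i_s=e_s+\ell$ forces the constraint $i_1+\cdots+i_j=m+j\ell$ with $i_s\ge\ell+1$, not $i_1+\cdots+i_j=m$ with $i_s\ge 1$. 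You flag the shift (``keep careful track of the constant shift'') but never reconcile it, and it cannot be reconciled as stated — the two expressions are genuinely different. The $\ell=1$ case makes this visible: Corollary \ref{Corollary3} correctly carries the constraint $r^{i_1}+\cdots+r^{i_k}=m+k$, whereas the displayed Corollary \ref{Corollary4} at $\ell=1$ would demand $r^{a_1}+\cdots+r^{a_j}=m$. Concretely, for $r=3$, $m=2$, $\ell=1$ the displayed formula evaluates to $\Pi(2)\,M^{(1)}(1)^2=1$, while $BC_2=-1/D_1$. So your derivation is correct up to the shifted formula above; to finish you must either state the conclusion with $M^{(\ell)}(i_s+\ell)$ (equivalently, with the constraint $i_1+\cdots+i_j=m+j\ell$), or read $M^{(\ell)}(i)$ with the convention $r^{e_1}+\cdots+r^{e_\ell}=i+\ell$. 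As written, the last ``renaming'' silently discards the shift you correctly identified in the middle of the argument.
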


In the case of Kaneko and Komatsu's Bernoulli-Carlitz numbers,
we have $$f(z)=\frac{\log_{C}(z)}{z}=\sum_{j=0}^{\infty}(-1)^{j}\frac{z^{r^{j}-1}}{L_{j}}.$$
Define
\begin{equation}\label{(9)}
\delta_e^{**}=\begin{cases}
(-1)^{n}\frac1{L_{n}} &\text{if $e=r^n-1$ for some }n \\
0 &\text{if $e\neq r^n-1$ for any }n,
\end{cases}
\end{equation}
then comparing with (\ref{def-f(z)}), we have
\begin{equation}\label{(11)}\lambda_{j}=\delta_{j}^{**}\end{equation} for $j=0,1,2,\ldots.$

From Theorem \ref{Theorem2} and (\ref{(10)}), we also recover the following closed form
expression for Kaneko and Komatsu's higher order Cauchy-Carlitz numbers (see \cite[p. 249, Proposition 6]{KK}).

\begin{corollary}[Closed form expression for higher order Cauchy-Carlitz numbers]\label{Corollary5}
For $m\ge 1$, we have
$$ CC_{m}^{(\ell)}
=\Pi(m) \sum_{j=1}^{m}(-1)^j  \sum_{\substack{ i_1, \ldots, i_{j} \geq 1 \\
i_1 + \cdots + i_j =m}} M^{(\ell)}(i_1 )\cdots M^{(\ell)}(i_j ),$$ where for each $i,$
$$M^{(\ell)}(i) :=\sum_{\substack{j_1,  \ldots, j_{\ell} \geq 0 \\
r^{j_1} + \cdots + r^{j_\ell} =i}}\frac{(-1)^{j_1+\cdots+j_\ell}}{L_{j_1}\cdots L_{j_\ell}}.$$
\end{corollary}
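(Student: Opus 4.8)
The plan is to deduce Corollary~\ref{Corollary5} from Theorem~\ref{Theorem2} by exactly the specialization used for Corollary~\ref{Corollary4}, simply replacing the Bernoulli-Carlitz coefficients $\delta_{e}^{*}$ by the Cauchy-Carlitz coefficients $\delta_{e}^{**}$. Recall that in the Cauchy-Carlitz case
$$f(z)=\frac{\log_{C}(z)}{z}=\sum_{j=0}^{\infty}(-1)^{j}\frac{z^{r^{j}-1}}{L_{j}},$$
so comparing with $(\ref{def-f(z)})$ we have $\lambda_{j}=\delta_{j}^{**}$ by $(\ref{(11)})$; here $\lambda_{0}=1$, so $1/f(z)^{\ell}$ is a well-defined power series, and $AC_{m}^{(\ell)}=CC_{m}^{(\ell)}$ by $(\ref{def-hapnf})$ and $(\ref{CC-h})$. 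Substituting $\lambda_{j}=\delta_{j}^{**}$ into Theorem~\ref{Theorem2} immediately gives
$$CC_{m}^{(\ell)}=\Pi(m)\sum_{k=1}^{m}(-1)^{k}\sum_{e_{1}+\cdots+e_{k}=m\atop e_{1},\dots,e_{k}\ge 1}D_{\ell}(e_{1})\cdots D_{\ell}(e_{k}),$$
so the whole task is reduced to computing $D_{\ell}(e)$ for this choice of $\lambda$.

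To do this I would expand the definition $(\ref{D-ell})$. By $(\ref{(9)})$ a factor $\delta_{i_{s}}^{**}$ vanishes unless $i_{s}=r^{j_{s}}-1$ for some integer $j_{s}\ge 0$ (the value $i_{s}=0$ being matched to $j_{s}=0$), in which case $\delta_{i_{s}}^{**}=(-1)^{j_{s}}/L_{j_{s}}$. Hence only the terms indexed by tuples $(j_{1},\dots,j_{\ell})$ survive in $D_{\ell}(e)=\sum_{i_{1}+\cdots+i_{\ell}=e,\,i_{s}\ge 0}\delta_{i_{1}}^{**}\cdots\delta_{i_{\ell}}^{**}$; under the substitution $i_{s}=r^{j_{s}}-1$ the constraint $i_{1}+\cdots+i_{\ell}=e$ turns into $r^{j_{1}}+\cdots+r^{j_{\ell}}=e+\ell$ and the summand into $(-1)^{j_{1}+\cdots+j_{\ell}}/(L_{j_{1}}\cdots L_{j_{\ell}})$, which is precisely the inner sum defining $M^{(\ell)}(e)$. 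Putting $D_{\ell}(e)=M^{(\ell)}(e)$ back into the displayed formula and renaming $k\mapsto j$, $e_{s}\mapsto i_{s}$ yields the asserted identity; taking $\ell=1$ then recovers the Cauchy-Carlitz analogue of Corollary~\ref{Corollary3}, and for general $\ell$ this is Kaneko and Komatsu's formula.

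No idea beyond Theorem~\ref{Theorem2} is needed, and the argument runs word for word parallel to the derivation of Corollary~\ref{Corollary4}, so I do not expect a genuine obstacle; the real content was already supplied by Theorem~\ref{Theorem2}. The one point that must be handled carefully is the reindexing in the previous paragraph: one should verify that $(i_{1},\dots,i_{\ell})\mapsto(j_{1},\dots,j_{\ell})$ is an honest bijection between the nonvanishing terms of $D_{\ell}(e)$ and the $\ell$-tuples appearing in $M^{(\ell)}(e)$, that the boundary case $i_{s}=0$ is treated consistently, and that the shift by $\ell$ produced by the $-1$'s in the exponents $r^{j_{s}}-1$ is carried correctly through to the final formula.
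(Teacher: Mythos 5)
Your route is the paper's own: the paper gives no proof of Corollary~\ref{Corollary5} beyond the remark that it follows from Theorem~\ref{Theorem2} by the specialization $\lambda_j=\delta_j^{**}$ of (\ref{(11)}), and that is exactly what you do. The difficulty is the identification $D_\ell(e)=M^{(\ell)}(e)$, which is precisely the step you defer (``the shift by $\ell$ \dots is carried correctly through'') and which fails for the $M^{(\ell)}$ displayed in the statement. Your own computation in the same sentence shows that under $i_s=r^{j_s}-1$ the constraint $i_1+\cdots+i_\ell=e$ becomes $r^{j_1}+\cdots+r^{j_\ell}=e+\ell$, so what you have actually established is $D_\ell(e)=M^{(\ell)}(e+\ell)$, not $D_\ell(e)=M^{(\ell)}(e)$. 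Substituting into Theorem~\ref{Theorem2} therefore yields
$$CC_m^{(\ell)}=\Pi(m)\sum_{j=1}^m(-1)^j\sum_{\substack{i_1,\dots,i_j\ge1\\ i_1+\cdots+i_j=m}}M^{(\ell)}(i_1+\ell)\cdots M^{(\ell)}(i_j+\ell),$$
and the shift by $\ell$ does not cancel. Indeed the identity with the unshifted $M^{(\ell)}$ is false as it stands: for $\ell=1$, $m=1$ it gives $CC_1=-\Pi(1)M^{(1)}(1)=-1$, whereas $z/\log_C(z)=1+z^{r-1}/L_1+\cdots$ shows $CC_1=0$ whenever $r>2$. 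Compare Corollary~\ref{Corollary3}, where the analogous constraint correctly reads $r^{i_1}+\cdots+r^{i_k}=m+k$ rather than $=m$.

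The repair lies not in your strategy but in the reading of $M^{(\ell)}$: the statement is correct precisely when the inner constraint is $(r^{j_1}-1)+\cdots+(r^{j_\ell}-1)=i$, equivalently $r^{j_1}+\cdots+r^{j_\ell}=i+\ell$ (this is the form in Kaneko--Komatsu's original proposition), for then $M^{(\ell)}(i)=D_\ell(i)$ on the nose and your substitution into Theorem~\ref{Theorem2} is immediate. As written, however, the final step of your proof asserts an equality that your preceding computation contradicts, so you must either carry the shift $M^{(\ell)}(i_s)\mapsto M^{(\ell)}(i_s+\ell)$ into the final formula or replace the displayed definition of $M^{(\ell)}$ by the shifted one; the same correction is needed in your implicit use of Corollary~\ref{Corollary4}.
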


Generalizing Jeong, Kim and Son's result for Bernoulli-Carlitz numbers \cite[p. 63, Theorem 4.1]{JKS},
we get another closed form expression for Appell-Carlitz numbers.

\begin{theorem}[Another closed form expression for Appell-Carlitz numbers]\label{Theorem3}
For $m\geq 1$, we have
$$\begin{aligned}
AC_{m}
&=\Pi(m) \sum_{j=1}^{m} (-1)^j  \sum_{\substack{ i_1, \ldots, i_m \geq 0 \\
i_1 +\cdots+ i_m =j\\
i_1 +2i_2 + \cdots+ mi_m =m } }  \binom{j}{i_1,  \ldots, i_m}
\lambda_1^{i_1}  \lambda_2^{i_2}  \cdots \lambda_m^{i_m}.
\end{aligned}$$
\end{theorem}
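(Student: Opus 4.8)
The plan is to derive the formula directly from the defining generating function $S(z)=\sum_{n\ge0}\frac{AC_n}{\Pi(n)}z^n$, by expanding $1/f(z)$ as a geometric series and then invoking the multinomial theorem; an equivalent route is to regroup the sum over compositions in Corollary~\ref{Corollary2} into a sum over partitions, which is precisely where the multinomial coefficients $\binom{j}{i_1,\dots,i_m}$ appear.

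First I would record that the normalization $AC_0=1$ together with $\Pi(0)=1$ gives $S(0)=1$, hence $\lambda_0=f(0)=1/S(0)=1$, so that $f(z)=1+g(z)$ with $g(z)=\sum_{n\ge1}\lambda_n z^n$. Since $g$ has zero constant term, the expansion
$$S(z)=\frac1{f(z)}=\sum_{j=0}^{\infty}(-1)^j g(z)^j$$
is a legitimate identity in $k[[z]]$: each power of $z$ receives contributions from only finitely many values of $j$, so no convergence question arises over $k=\mathbb{F}_r(T)$. Next I would apply the multinomial theorem to each power $g(z)^j$, obtaining
$$g(z)^j=\sum_{\substack{i_1,i_2,\dots\ge0\\ i_1+i_2+\cdots=j}}\binom{j}{i_1,i_2,\dots}\prod_{n\ge1}(\lambda_n z^n)^{i_n}=\sum_{\substack{i_1,i_2,\dots\ge0\\ \sum_n i_n=j}}\binom{j}{i_1,i_2,\dots}\left(\prod_{n\ge1}\lambda_n^{i_n}\right)z^{\sum_n n i_n}.$$

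Then I would extract the coefficient of $z^m$ for a fixed $m\ge1$. The constraint $\sum_n n i_n=m$ forces $i_n=0$ for every $n>m$, so the multi-index may be truncated to $(i_1,\dots,i_m)$; moreover $j=\sum_n i_n\le\sum_n n i_n=m$ since each $i_n\ge0$, and the term $j=0$ contributes nothing because $g(z)^0=1$. Hence the coefficient of $z^m$ in $S(z)$, which is $\frac{AC_m}{\Pi(m)}$, equals
$$\sum_{j=1}^{m}(-1)^j\sum_{\substack{i_1,\dots,i_m\ge0\\ i_1+\cdots+i_m=j\\ i_1+2i_2+\cdots+mi_m=m}}\binom{j}{i_1,\dots,i_m}\lambda_1^{i_1}\cdots\lambda_m^{i_m},$$
and multiplying through by $\Pi(m)$ gives the asserted identity.

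The argument is essentially bookkeeping, and I do not anticipate a serious obstacle; the points that need care are the coefficientwise justification of the geometric-series expansion and the matching of the two index constraints ($\sum_n i_n=j$ and $\sum_n n i_n=m$) with the summation ranges in the statement, but these are purely formal and the whole argument is valid verbatim over $\mathbb{F}_r(T)$. For the alternative derivation via Corollary~\ref{Corollary2}, I would observe that among the compositions $e_1+\cdots+e_k=m$ with $e_t\ge1$ counted there, those in which each value $n$ occurs exactly $i_n$ times are $\binom{k}{i_1,\dots,i_m}$ in number (with $k=i_1+\cdots+i_m$) and satisfy $\lambda_{e_1}\cdots\lambda_{e_k}=\lambda_1^{i_1}\cdots\lambda_m^{i_m}$; summing over partition types and renaming $k$ as $j$ recovers the same formula.
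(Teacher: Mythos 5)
Your proof is correct and follows essentially the same route as the paper: both expand $S(z)=1/f(z)$ as the geometric series $\sum_{j\ge0}(-1)^j(f(z)-1)^j$ (using $\lambda_0=1$ so that $g=f-1$ has zero constant term) and then read off the coefficient of $z^m$. The only difference is presentational: where you apply the multinomial theorem to $g(z)^j$ directly, the paper extracts the same coefficient via the Hasse--Teichm\"uller power rule (Lemma~\ref{quotientrule3}) and then uses $g(0)=0$ to discard all but the $k=j$ terms, which yields exactly your multinomial coefficients $j!/(i_1!\cdots i_m!)$.
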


We have a determinant expression of the higher order Appell-Carlitz numbers.

\begin{theorem}[Determinant expression of higher order Appell-Carlitz numbers]\label{Theorem4}
For $m\ge 1$, we have
$$
AC_{m}^{(\ell)}=(-1)^m \Pi(m)\left|
\begin{array}{ccccc}
D_\ell(1)&1&&&\\
D_\ell(2)&D_\ell(1)&&&\\
\vdots&\vdots&\ddots&1&\\
D_\ell(n-1)&D_\ell(n-2)&\cdots&D_\ell(1)&1\\
D_\ell(n)&D_\ell(n-1)&\cdots&D_\ell(2)&D_\ell(1)
\end{array}
\right|,$$
where $D_\ell(e)$ are given in $(\ref{D-ell})$.
\end{theorem}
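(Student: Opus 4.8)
The plan is to reduce the statement to the recurrence of Theorem~\ref{Theorem1} and then to a cofactor expansion of the displayed Hessenberg-type determinant (the index $n$ inside the matrix should be read as $m$, so that the matrix is $m\times m$; write $M_k$ for the $k\times k$ matrix of that form, $d_k=\det M_k$, and set $d_0:=1$). First I would normalize by putting $a_m=AC_m^{(\ell)}/\Pi(m)$, so that $a_0=1$, and rewrite Theorem~\ref{Theorem1} in the equivalent form
$$a_m=-\sum_{k=1}^{m}D_\ell(k)\,a_{m-k}\qquad(m\ge1),$$
which comes from dividing $AC_m^{(\ell)}=-\Pi(m)\sum_{i=0}^{m-1}\frac{AC_i^{(\ell)}}{\Pi(i)}D_\ell(m-i)$ by $\Pi(m)$ and substituting $k=m-i$. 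It then suffices to prove $a_m=(-1)^m d_m$ for all $m\ge0$; multiplying by $\Pi(m)$ gives the theorem.

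Next I would obtain the recurrence satisfied by $d_m$ by expanding $\det M_m$ along its first column, whose entries are $D_\ell(1),D_\ell(2),\dots,D_\ell(m)$. The key observation is that the last possibly nonzero entry of the $k$-th row of $M_m$ lies in column $k+1$; hence, deleting row $i$ and column $1$ produces a block lower-triangular matrix whose top-left $(i-1)\times(i-1)$ block is lower triangular with $1$'s on the diagonal, whose top-right block is zero, and whose bottom-right $(m-i)\times(m-i)$ block is exactly $M_{m-i}$. Consequently the $(i,1)$-minor equals $d_{m-i}$, and
$$d_m=\sum_{i=1}^{m}D_\ell(i)\,(-1)^{i+1}\,d_{m-i}=-\sum_{i=1}^{m}(-1)^{i}D_\ell(i)\,d_{m-i}.$$

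Finally I would set $e_m:=(-1)^m d_m$ and observe that the signs collapse: substituting $d_{m-i}=(-1)^{m-i}e_{m-i}$ gives $e_m=-\sum_{i=1}^{m}(-1)^{m+i}(-1)^{m-i}D_\ell(i)\,e_{m-i}=-\sum_{i=1}^{m}D_\ell(i)\,e_{m-i}$, with $e_0=1$; this is precisely the recurrence for $a_m$, so induction on $m$ yields $e_m=a_m$, that is, $AC_m^{(\ell)}=(-1)^m\Pi(m)\,d_m$. The one genuinely delicate step is the cofactor computation in the second paragraph: one must track the zero pattern above the superdiagonal carefully enough to exhibit the unit lower-triangular block and --- more importantly --- to recognize the remaining lower-right block as an exact copy of the smaller matrix $M_{m-i}$. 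Everything else is sign bookkeeping; alternatively one could quote a classical Trudi-type determinant identity for linear recurrences and just check that the recurrence above is the one it encodes.
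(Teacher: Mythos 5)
Your proposal is correct and is essentially the paper's own argument: both reduce the theorem to showing that the (sign-adjusted) determinant satisfies the recurrence of Theorem~\ref{Theorem1} via a cofactor expansion of the Hessenberg matrix, and then conclude by induction from the common initial value. The only cosmetic difference is that you expand along the first column in one step (carefully exhibiting the block-triangular minors), while the paper expands along the first row iteratively; the resulting recurrence and sign bookkeeping are identical.
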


Letting $\ell=1$ in the above result, we have the following determinant expression for the related numbers of  Appell-Carlitz numbers.

\begin{corollary}[Determinant expression of Appell-Carlitz numbers]
For $m\ge 1$, we have
\begin{align*}
AC_{m}
=(-1)^m \Pi(m)\left|
\begin{array}{ccccc}
\lambda_{1}&1&&&\\
\lambda_2&\lambda_{1}&&&\\
\vdots&\vdots&\ddots&1&\\
\lambda_{m-1}&\lambda_{m-2}&\cdots&\lambda_{1}&1\\
\lambda_{m}&\lambda_{m-1}&\cdots&\lambda_{2}&\lambda_{1}
\end{array}
\right|.
\end{align*}
\end{corollary}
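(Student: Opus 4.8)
The plan is to obtain this as the case $\ell=1$ of Theorem~\ref{Theorem4}. Setting $\ell=1$ in the defining sum~(\ref{D-ell}), the only way to write $e=i_1+\cdots+i_\ell$ with a single nonnegative summand is $i_1=e$, so $D_1(e)=\lambda_e$ for all $e\ge 0$. Substituting $D_1(e)=\lambda_e$ into the determinant of Theorem~\ref{Theorem4} and using $AC_m^{(1)}=AC_m$ yields the displayed formula at once, so no argument is needed beyond this substitution once Theorem~\ref{Theorem4} is in hand.

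I would also include a direct derivation from the recurrence of Corollary~\ref{Corollary1}, since that is the mechanism underlying Theorem~\ref{Theorem4}. Put $a_n=AC_n/\Pi(n)$, so that $a_0=1$ and, after dividing Corollary~\ref{Corollary1} by $\Pi(m)$, one has $a_m=-\sum_{k=1}^{m}\lambda_k\,a_{m-k}$ for $m\ge 1$; here $\lambda_0=f(0)=1/S(0)=1$ because $S(0)=AC_0/\Pi(0)=1$. Let $H_m$ denote the $m\times m$ lower Hessenberg determinant on the right-hand side of the corollary, with $H_0:=1$. Expanding $H_m$ along its last row: deleting the last row together with the column containing $\lambda_k$ leaves a block-triangular matrix whose upper-left block is the Hessenberg determinant $H_{m-k}$ and whose lower-right block is lower triangular with $1$'s on the diagonal, hence of determinant $1$; tracking the cofactor sign gives the single-step relation $H_m=\sum_{k=1}^{m}(-1)^{k-1}\lambda_k H_{m-k}$. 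Multiplying by $(-1)^m$ turns this into $(-1)^mH_m=-\sum_{k=1}^{m}\lambda_k\,(-1)^{m-k}H_{m-k}$, which is exactly the recurrence satisfied by $a_m$; since $(-1)^0H_0=1=a_0$, induction on $m$ gives $a_m=(-1)^mH_m$, that is, $AC_m=(-1)^m\Pi(m)H_m$.

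The one step requiring care will be the sign bookkeeping in the cofactor expansion: I must check that striking the last row and the $(m-k+1)$-st column produces precisely the two diagonal blocks described above and that the accompanying cofactor sign is $(-1)^{k-1}$. Everything else is formal and the base case is trivial, so that is the only place where I would spell out the computation in full.
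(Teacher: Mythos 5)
Your proposal is correct and its main route is exactly the paper's: the corollary is obtained by setting $\ell=1$ in Theorem~\ref{Theorem4}, where $D_1(e)=\lambda_e$ follows immediately from (\ref{D-ell}). Your supplementary direct derivation via the last-row cofactor expansion of the Hessenberg determinant is also sound, but it essentially reproduces (in the case $\ell=1$, and expanding along the last row rather than the first) the inductive argument the paper already uses to prove Theorem~\ref{Theorem4}, so it adds no genuinely new mechanism.
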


Then by (\ref{(10)}), we obtain a determinant expression of Bernoulli-Carlitz numbers.

\begin{corollary}[Determinant expression of Bernoulli-Carlitz numbers]\label{Corollary6}
For  $m\geq 1,$ we have
\begin{equation}
\begin{aligned}
&BC_{m}
=(-1)^m \Pi(m)\left|
\begin{array}{ccccc}
\delta_1^*&1&&&\\
\delta_2^*&\delta_1^*&&&\\
\vdots&\vdots&\ddots&1&\\
\delta_{m-1}^*&\delta_{m-2}^*&\cdots&\delta_1^*&1\\
\delta_m^*&\delta_{m-1}^*&\cdots&\delta_2^*&\delta_1^*
\end{array}
\right|,
\end{aligned}
\label{det:ghbn}
\end{equation}
where $$\delta_e^*=\begin{cases}
\frac1{D_n} &\text{if $e=r^n-1$ for some }n \\
0 &\text{if $e\neq r^n-1$ for any }n.
\end{cases}$$
\end{corollary}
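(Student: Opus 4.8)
The plan is to obtain Corollary~\ref{Corollary6} as an immediate specialization of the determinant expression for the related numbers of Appell--Carlitz numbers (the $\ell=1$ case of Theorem~\ref{Theorem4}), using the explicit identification of the coefficients $\lambda_j$ in the Bernoulli--Carlitz case. First I would recall that the Bernoulli--Carlitz numbers correspond to the choice $S(z)=z/e_C(z)$ in (\ref{def-Sz}), so that $AC_m=BC_m$ and
$$f(z)=\frac1{S(z)}=\frac{e_C(z)}{z}=\sum_{j=0}^\infty\frac{z^{r^j-1}}{D_j}.$$
Comparing this with the expansion $f(z)=\sum_{n\ge 0}\lambda_n z^n$ from (\ref{def-f(z)}), one reads off $\lambda_j=\delta_j^*$ for all $j\ge 0$, which is exactly (\ref{(10)}). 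Substituting $\lambda_j\mapsto\delta_j^*$ into the determinant displayed in the $\ell=1$ specialization of Theorem~\ref{Theorem4}, and using $AC_m=BC_m$, yields (\ref{det:ghbn}) directly.

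Should one prefer a self-contained argument rather than quoting Theorem~\ref{Theorem4}, I would argue from the recurrence in Corollary~\ref{Corollary1}. Writing $b_m=BC_m/\Pi(m)$, so that $b_0=1$, that recurrence reads $\sum_{i=0}^{m}b_i\,\delta_{m-i}^*=0$ for $m\ge 1$, where $\delta_0^*=1/D_0=1$ because $r^0-1=0$. Expanding the $m\times m$ Hessenberg determinant on the right-hand side of (\ref{det:ghbn}) along its last row reproduces precisely this linear recurrence together with the initial value $b_0=1$; hence an induction on $m$ gives that $b_m$ equals $(-1)^m$ times that determinant, and multiplying by $\Pi(m)$ gives the claim.

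The only point that needs even a moment's attention is the normalization $\lambda_0=\delta_0^*=1$, which is what makes the Hessenberg determinant formula apply verbatim (the matrix in (\ref{det:ghbn}) has $1$'s on the superdiagonal precisely because the constant term of $f$ is $1$); everything else is a substitution into a formula already established. For that reason I do not anticipate a genuine obstacle here: all the real content sits in Theorem~\ref{Theorem4}, and the work of this step is just translating the abstract coefficients $\lambda_j$ into the explicit $\delta_j^*$ read off from the Carlitz exponential.
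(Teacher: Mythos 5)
Your proposal is correct and follows exactly the paper's route: the paper likewise obtains Corollary~\ref{Corollary6} by setting $\ell=1$ in Theorem~\ref{Theorem4} and then substituting $\lambda_j=\delta_j^*$ via (\ref{(10)}). Your alternative self-contained argument via the recurrence and the Hessenberg determinant expansion is also sound, but it is essentially the induction already carried out in the paper's proof of Theorem~\ref{Theorem4}, so it adds nothing genuinely different.
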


\begin{remark}
Since $D_{n}$ equals to $\Pi(r^{n})$, the Carlitz factorial, the above result is an analogue of the following classical determinant expression of Bernoulli numbers $B_{m}$ by Glaisher in 1875 (see \cite[p. 53]{Glaisher}):
\begin{equation}\label{Berd-g}
B_m=(-1)^{m} m!\left|
\begin{array}{ccccc}
\frac{1}{2!}&1&&&\\
\frac{1}{3!}&\frac{1}{2!}&&&\\
\vdots&\vdots&\ddots&1&\\
\frac{1}{m!}&\frac{1}{(m-1)!}&\cdots&\frac{1}{2!}&1\\
\frac{1}{(m+1)!}&\frac{1}{m!}&\cdots&\frac{1}{3!}&\frac{1}{2!}
\end{array}
\right|.
\end{equation}
\end{remark}

Similarly, by (\ref{(11)}), we obtain the following determinant expression of  Cauchy-Carlitz numbers.
\begin{corollary}[Determinant expression of Cauchy-Carlitz numbers]\label{Corollary7}
For $m\geq 1,$ we have
\begin{equation}
\begin{aligned}
&CC_{m}
=(-1)^m \Pi(m)\left|
\begin{array}{ccccc}
\delta_1^{**}&1&&&\\
\delta_2^{**}&\delta_1^{**}&&&\\
\vdots&\vdots&\ddots&1&\\
\delta_{m-1}^{**}&\delta_{m-2}^{**}&\cdots&\delta_1^{**}&1\\
\delta_m^{**}&\delta_{m-1}^{**}&\cdots&\delta_2^{**}&\delta_1^{**}
\end{array}
\right|,
\end{aligned}
\label{det:ghcn}
\end{equation}
where $$\delta_e^{**}=\begin{cases}
(-1)^{n}\frac1{L_{n}} &\text{if $e=r^n-1$ for some }n \\
0 &\text{if $e\neq r^n-1$ for any }n,
\end{cases}$$
\end{corollary}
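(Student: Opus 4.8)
\emph{Proof proposal.} The plan is to specialize the general determinant expression of Theorem~\ref{Theorem4} to the case $\ell=1$ and then substitute the power-series coefficients attached to the Carlitz logarithm, exactly mirroring the derivation of Corollary~\ref{Corollary6}.

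First I would record the shape of $f$ in the Cauchy-Carlitz case. By definition the Cauchy-Carlitz numbers $CC_m$ are the Appell-Carlitz numbers attached to the choice $S(z)=z/\log_C(z)$, so that
$$f(z)=\frac{1}{S(z)}=\frac{\log_C(z)}{z}=\sum_{j=0}^\infty(-1)^j\frac{z^{r^j-1}}{L_j}$$
by the defining series (\ref{carlitzlog}) of $\log_C$. Matching this against $f(z)=\sum_{n=0}^\infty\lambda_n z^n$ from (\ref{def-f(z)}) yields $\lambda_j=\delta_j^{**}$ for every $j\ge 0$, which is precisely (\ref{(11)}); here $\delta_j^{**}$ is the coefficient defined in (\ref{(9)}), supported on indices of the form $r^n-1$.

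Next I would apply Theorem~\ref{Theorem4} with $\ell=1$. From (\ref{D-ell}) one has $D_1(e)=\sum_{i_1=e,\ i_1\ge 0}\lambda_{i_1}=\lambda_e$, so the theorem collapses to the determinant identity
$$AC_m=(-1)^m\Pi(m)\left|
\begin{array}{ccccc}
\lambda_1&1&&&\\
\lambda_2&\lambda_1&&&\\
\vdots&\vdots&\ddots&1&\\
\lambda_{m-1}&\lambda_{m-2}&\cdots&\lambda_1&1\\
\lambda_m&\lambda_{m-1}&\cdots&\lambda_2&\lambda_1
\end{array}
\right|$$
for $m\ge 1$, i.e.\ the Corollary stated immediately after Theorem~\ref{Theorem4}. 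Substituting $\lambda_j=\delta_j^{**}$ and using that $AC_m=CC_m$ for the present choice of $S$ (as recorded in Section~\ref{intro}) gives the asserted formula (\ref{det:ghcn}).

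I do not expect any genuine obstacle: the proof is the Bernoulli-Carlitz argument of Corollary~\ref{Corollary6} with $e_C$ replaced by $\log_C$ and $\delta_j^*$ replaced by $\delta_j^{**}$ throughout. The only point that rewards a moment's attention is checking that one expands the reciprocal $f(z)=1/S(z)=\log_C(z)/z$ rather than $S(z)$ itself, so that the matrix entries are the coefficients $\delta_j^{**}$ with the correct signs $(-1)^n/L_n$ and not their negatives.
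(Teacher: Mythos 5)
Your proposal is correct and matches the paper's own derivation: the paper likewise obtains Corollary~\ref{Corollary7} by taking $\ell=1$ in Theorem~\ref{Theorem4} (so $D_1(e)=\lambda_e$) and substituting $\lambda_j=\delta_j^{**}$ from the expansion of $f(z)=\log_C(z)/z$ as in (\ref{(11)}). Your closing remark about expanding $1/S(z)$ rather than $S(z)$ is exactly the right sanity check for the signs $(-1)^n/L_n$.
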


\begin{remark}
Since the classical Cauchy numbers are defined by the generating function
\begin{equation}\label{Cauchy}
\frac{t}{\log(1+t)}=\sum_{n=0}^{\infty} c_{n}\frac{t^{n}}{n!}
\end{equation} (see \cite{Glaisher}),
by applying the power series expansion of \begin{equation}\label{power}f(t)=\frac{\log(1+t)}{t}=\sum_{n=0}^{\infty}(-1)^{n}\frac{t^{n}}{n+1}
\end{equation}
 to \cite[Theorem 3]{HKo},
we get the following determinant expression of Cauchy numbers
\begin{equation}\label{C-deter+}
\begin{aligned}
c_{m}
&= (-1)^{m}m!\left|
\begin{array}{ccccc}
-\frac{1}{2}&1&&&\\
\frac{1}{3}&-\frac{1}{2}&&&\\
\vdots&\vdots&\ddots&1&\\
\frac{(-1)^{m-1}}{m}&\frac{(-1)^{m-2}}{m-1}&\cdots&-\frac{1}{2}&1\\
\frac{(-1)^{m}}{m+1}&\frac{(-1)^{m-1}}{m}&\cdots&\frac{1}{3}&-\frac{1}{2}
\end{array}\right|.
\end{aligned}
\end{equation}
This is equivalent  to Glaisher's following determinant expression in 1875 (see \cite[p. 50]{Glaisher}):
\begin{equation}\label{C-deter}
\begin{aligned}
c_{m}
&= m!\left|
\begin{array}{ccccc}
\frac{1}{2}&1&&&\\
\frac{1}{3}&\frac{1}{2}&&&\\
\vdots&\vdots&\ddots&1&\\
\frac{1}{m}&\frac{1}{m-1}&\cdots&\frac{1}{2}&1\\
\frac{1}{m+1}&\frac{1}{m}&\cdots&\frac{1}{3}&\frac{1}{2}
\end{array}\right|
\end{aligned}
\end{equation}
if considering the generating function
\begin{equation}\label{Cauchy}
\frac{-t}{\log(1-t)}=\sum_{n=0}^{\infty}(-1)^{n}c_{n}\frac{t^{n}}{n!}
\end{equation}
and applying the power series expansion
$$f(t)=\frac{\log(1-t)}{-t}=\sum_{n=0}^{\infty}\frac{t^{n}}{n+1}$$ to \cite[Theorem 3]{HKo}.
By comparing the power series expansion of the Carlitz logarithm
$$
\log_C(z)=\sum_{i=0}^\infty(-1)^i\frac{z^{r^i}}{L_i}
$$
and the power series expansion of the classical logarithm
$$\log(1+t) =\sum_{n=1}^{\infty}(-1)^{n-1}\frac{t^{n}}{n},$$
we have seen the analogue between (\ref{det:ghcn}) and (\ref{C-deter+}).
\end{remark}

\section{Hasse-Teichm\"uller derivatives (\cite[Section 2]{HKo} and \cite[Section 2]{JKS})}\label{Ha-Te}
Since $n!=0$ in a field of characteristic $p$ if $n\geq p$, and $\frac{d}{dt}(t^{n})=0$ if $p$ divides $n$,
the classical differential calculus faces essential difficulties in positive characteristics. In 1936, Hasse \cite{Hasse} introduced the concept of hyperdifferentials to overcome these, now known as the Hasse-Teichm\"uller derivatives.
In this section, we shall recall the definition and basic properties of these derivatives
which serves as the main tool for our proof.

Let $\mathbb{F}$ be a field of any characteristic, $\mathbb{F}[[z]]$ the ring of formal power series in one variable $z$, and $\mathbb{F}((z))$ the field of Laurent series in $z$. Let $m$ be a nonnegative integer. The Hasse-Teichm\"uller derivative $H^{(m)}$ of order $m$ is defined by
\begin{equation}\label{HT}
H^{(m)}\left(\sum_{n=R}^{\infty} c_n z^n\right)
=\sum_{n=R}^{\infty} c_n \binom{n}{m}z^{n-m}
\end{equation}
for $\sum_{n=R}^{\infty} c_n z^n\in \mathbb{F}((z))$,
where $R$ is an integer and $c_n\in\mathbb{F}$ for any $n\geq R$. Note that $\binom{n}{m}=0$ if $n<m$.

The Hasse-Teichm\"uller derivatives satisfy the product rule \cite{Teich}, the quotient rule \cite{GN} and the chain rule \cite{Hasse}.
One of the product rules can be described as follows.

\begin{lemma}[{\cite{Teich,JKS}}]\label{productrule2}
For $f_i\in\mathbb F[[z]]$ $(i=1,\dots,k)$ with $k\ge 2$ and for $m\ge 1$, we have
$$H^{(m)}(f_1\cdots f_k)=\sum_{i_1+\cdots+i_k=m\atop i_1,\dots,i_k\ge 0}H^{(i_1)}(f_1)\cdots H^{(i_k)}(f_k).$$
\end{lemma}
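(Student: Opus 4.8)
The plan is to reduce everything to the two–factor case and then climb to arbitrary $k$ by induction on $k$.

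First I would prove the case $k=2$, i.e.\ $H^{(m)}(f_1f_2)=\sum_{i+j=m}H^{(i)}(f_1)H^{(j)}(f_2)$, directly from the definition \eqref{HT}. Write $f_1=\sum_{a\ge 0}c_az^a$ and $f_2=\sum_{b\ge 0}d_bz^b$. On the left, $f_1f_2=\sum_{n\ge 0}\bigl(\sum_{a+b=n}c_ad_b\bigr)z^n$, so the coefficient of $z^{n-m}$ in $H^{(m)}(f_1f_2)$ is $\binom{n}{m}\sum_{a+b=n}c_ad_b$. On the right, $H^{(i)}(f_1)H^{(j)}(f_2)=\sum_{a,b}c_ad_b\binom{a}{i}\binom{b}{j}z^{a+b-i-j}$, and summing over $i+j=m$ collects the coefficient of $z^{n-m}$ as $\sum_{a+b=n}c_ad_b\bigl(\sum_{i+j=m}\binom{a}{i}\binom{b}{j}\bigr)$. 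The two expressions agree by the Chu--Vandermonde convolution $\sum_{i+j=m}\binom{a}{i}\binom{b}{j}=\binom{a+b}{m}$; since this is an identity of integers (it is the comparison of coefficients in $(1+z)^a(1+z)^b=(1+z)^{a+b}$), it remains valid after reduction into $\mathbb F$ of arbitrary characteristic. Because $f_1,f_2\in\mathbb F[[z]]$, each coefficient is a finite sum, so there is no convergence issue.

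For general $k\ge 2$ I would induct on $k$, the base case being what was just shown. For the inductive step set $g=f_1\cdots f_{k-1}$ and apply the two–factor rule to $g\cdot f_k$:
$$H^{(m)}(f_1\cdots f_k)=\sum_{i+i_k=m\atop i,i_k\ge 0}H^{(i)}(g)\,H^{(i_k)}(f_k).$$
Substituting the induction hypothesis $H^{(i)}(g)=\sum_{i_1+\cdots+i_{k-1}=i}H^{(i_1)}(f_1)\cdots H^{(i_{k-1})}(f_{k-1})$ and merging the two nested sums into a single sum over $i_1+\cdots+i_k=m$ gives the claimed formula.

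I do not expect a genuine obstacle here: the only substantive ingredient is the Chu--Vandermonde identity, and the remainder is a routine reindexing together with the induction. The single point that deserves a word of care is that, working in positive characteristic, one cannot invoke the analytic Leibniz rule and must instead verify the $k=2$ identity coefficient by coefficient as above; but the binomial coefficients appearing in \eqref{HT} are reductions of integers, so the integer identity does the job.
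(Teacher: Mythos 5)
Your proposal is correct. Note that the paper does not actually prove this lemma: it is quoted as a known result with references to Teichm\"uller and to Jeong--Kim--Son, so there is no in-paper argument to compare against. Your route --- verifying the $k=2$ case coefficientwise from the definition (\ref{HT}) via the Chu--Vandermonde convolution $\sum_{i+j=m}\binom{a}{i}\binom{b}{j}=\binom{a+b}{m}$ (an integer identity, hence valid after reduction into $\mathbb F$ in any characteristic), and then inducting on $k$ by grouping $f_1\cdots f_{k-1}$ --- is exactly the standard proof given in those sources, and the point you flag about not invoking the analytic Leibniz rule in positive characteristic is the right one.
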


The quotient rules can be described as follows.

\begin{lemma}[\cite{GN,JKS}]
For $f\in\mathbb F[[z]]\backslash \{0\}$ and $m\ge 1$,
we have
\begin{align}
H^{(m)}\left(\frac{1}{f}\right)&=\sum_{k=1}^m\frac{(-1)^k}{f^{k+1}}\sum_{i_1+\cdots+i_k=m\atop i_1,\dots,i_k\ge 1}H^{(i_1)}(f)\cdots H^{(i_k)}(f)
\label{quotientrule1}
\\
&=\sum_{k=1}^m\binom{m+1}{k+1}\frac{(-1)^k}{f^{k+1}}\sum_{i_1+\cdots+i_k=m\atop i_1,\dots,i_k\ge 0}H^{(i_1)}(f)\cdots H^{(i_k)}(f).
\label{quotientrule2}
\end{align}
\label{quotientrules}
\end{lemma}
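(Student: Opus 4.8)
The plan is to derive the first quotient rule \eqref{quotientrule1} from the two–factor Leibniz rule combined with strong induction on $m$, and then to obtain the second rule \eqref{quotientrule2} from the first by a purely combinatorial reorganization of its index set. First I would record that $H^{(m)}(1)=0$ for every $m\ge 1$, which is immediate from the definition \eqref{HT} since $\binom{0}{m}=0$. Writing $g=\tfrac1f\in\mathbb F((z))$ and applying the two–factor case of the product rule (Lemma \ref{productrule2}, which holds verbatim for a product of one power series and one Laurent series because the underlying Vandermonde identity $\sum_{i=0}^m\binom{a}{i}\binom{b}{m-i}=\binom{a+b}{m}$ is valid for all integers $a,b$) to $f\cdot g=1$, I obtain for $m\ge 1$
\[
0=H^{(m)}(fg)=\sum_{i=0}^m H^{(i)}(f)H^{(m-i)}(g)=fH^{(m)}(g)+\sum_{i=1}^m H^{(i)}(f)H^{(m-i)}(g),
\]
and hence the recurrence $H^{(m)}(g)=-\frac1f\sum_{i=1}^m H^{(i)}(f)\,H^{(m-i)}(g)$.

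Next I would prove \eqref{quotientrule1} by strong induction on $m$ using this recurrence. The base case $m=1$ gives $H^{(1)}(g)=-H^{(1)}(f)/f^2$, which is exactly the $k=1$ term. For the inductive step I would substitute the formula for $H^{(m-i)}(g)$, valid for $i<m$ by the inductive hypothesis, while the boundary term $i=m$ uses only $H^{(0)}(g)=1/f$ and produces $-H^{(m)}(f)/f^2$, i.e. the $k=1$ summand. In each term with $1\le i\le m-1$ having $k$ inner factors, absorbing the factor $H^{(i)}(f)$ into the product and setting $k'=k+1$ turns $(-1)^{k+1}f^{-(k+2)}$ into $(-1)^{k'}f^{-(k'+1)}$ and yields a product $H^{(i)}(f)H^{(j_1)}(f)\cdots H^{(j_{k'-1})}(f)$ of derivatives whose orders are positive and sum to $i+(m-i)=m$. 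Summing over all such decompositions reconstructs precisely the inner sum of \eqref{quotientrule1} with $k'$ positive parts, and collecting the $k'=1$ contribution from $i=m$ with the $k'\ge 2$ contributions gives the full range $1\le k'\le m$.

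Finally I would deduce \eqref{quotientrule2} from \eqref{quotientrule1}. In the inner sum of \eqref{quotientrule2} the indices are allowed to vanish, and $H^{(0)}(f)=f$. Classifying an ordered tuple $(i_1,\dots,i_k)$ with $i_t\ge 0$ summing to $m$ by its number $s$ of nonzero entries, there are $\binom{k}{s}$ choices for their positions, the $k-s$ zero entries contribute $f^{k-s}$, and the nonzero entries run over ordered compositions of $m$ into $s$ positive parts; since $f^{k-s}/f^{k+1}=f^{-(s+1)}$, interchanging summations rewrites the right–hand side of \eqref{quotientrule2} as
\[
\sum_{s=1}^m \frac1{f^{s+1}}\left(\sum_{j_1+\cdots+j_s=m\atop j_1,\dots,j_s\ge1}H^{(j_1)}(f)\cdots H^{(j_s)}(f)\right)\sum_{k=s}^m(-1)^k\binom{m+1}{k+1}\binom{k}{s}.
\]
Comparison with \eqref{quotientrule1} shows the two formulas agree exactly when $\sum_{k=s}^m(-1)^k\binom{m+1}{k+1}\binom{k}{s}=(-1)^s$ for every $s$. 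I would prove this identity by writing $\binom{k}{s}=\binom{k+1}{s+1}-\binom{k}{s+1}$, using the subset–of–a–subset relation $\binom{m+1}{k+1}\binom{k+1}{s+1}=\binom{m+1}{s+1}\binom{m-s}{k-s}$ and the alternating sum $\sum_j(-1)^j\binom{m-s}{j}=[\,s=m\,]$; this yields the recursion $T(s)+T(s+1)=(-1)^m[\,s=m\,]$ for $T(s):=\sum_{k=s}^m(-1)^k\binom{m+1}{k+1}\binom{k}{s}$, whence $T(s)=(-1)^s$ by downward induction from $T(m)=(-1)^m$.

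The main obstacle is bookkeeping rather than conceptual difficulty. In the inductive step one must check that the reindexing $k\mapsto k+1$ together with the boundary term $i=m$ covers each composition of $m$ into positive parts exactly once, with the correct sign and power of $f$; in the passage to \eqref{quotientrule2} the delicate points are the combinatorial regrouping by the number of nonzero indices and the resulting alternating binomial identity. Both are elementary but require care to avoid off–by–one errors in the ranges of $k$ and in the exponents of $f$.
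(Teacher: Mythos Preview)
Your proof is correct, but note that the paper does not actually prove this lemma: it is quoted from the references \cite{GN,JKS} and stated without argument. Your induction via the Leibniz rule applied to $f\cdot\tfrac1f=1$ is the standard route to \eqref{quotientrule1} and matches the approach in \cite{JKS}; the only point deserving a remark is your extension of Lemma~\ref{productrule2} to the case where one factor lies in $\mathbb F((z))$ rather than $\mathbb F[[z]]$, which you handle correctly via the Vandermonde identity (and which is needed since $f\in\mathbb F[[z]]\setminus\{0\}$ may have vanishing constant term). Your derivation of \eqref{quotientrule2} from \eqref{quotientrule1} by stratifying the tuples $(i_1,\dots,i_k)$ according to the number of nonzero entries and then verifying the binomial identity $\sum_{k=s}^m(-1)^k\binom{m+1}{k+1}\binom{k}{s}=(-1)^s$ is clean and complete; the recursion $T(s)+T(s+1)=(-1)^m[s=m]$ together with $T(m)=(-1)^m$ indeed forces $T(s)=(-1)^s$ for all $1\le s\le m$.
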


\begin{lemma}[\cite{Teich,JKS}]\label{quotientrule3}
For $f\in\mathbb F[[z]]$ and for $m\ge 1,$ $j\geq2,$ we have
$$\begin{aligned}
H^{(m)}({f^j})
&=\sum_{k=1}^{j} f^{j-k} \sum_{\substack{ i_1, \ldots, i_m \geq 0 \\ i_1 + \cdots+ i_m =k \\ i_1 +2i_2 + \cdots+ mi_m =m } }
\frac{j(j-1) \cdots (j-k+1)}{i_1 ! \cdots i_m !} \\
 &\quad\times (H^{(1)}(f))^{i_1}  \cdots (H^{(m)}(f))^{i_m}.
\end{aligned}$$
\end{lemma}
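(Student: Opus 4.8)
The plan is to obtain the formula from the symmetric product rule of Lemma \ref{productrule2} by regrouping the resulting sum according to the ``type'' of each summand. First I would apply Lemma \ref{productrule2} with $f_1=\cdots=f_j=f$ (so the number of factors, called $k$ there, is our $j\ge 2$), which gives
$$H^{(m)}(f^j)=\sum_{\substack{i_1+\cdots+i_j=m\\ i_1,\dots,i_j\ge 0}}H^{(i_1)}(f)\cdots H^{(i_j)}(f).$$
Each summand is an ordered product of $j$ factors $H^{(i_l)}(f)$, and by the definition \eqref{HT} of the Hasse-Teichm\"uller derivative we have $H^{(0)}(f)=f$, so factors with $i_l=0$ simply contribute a copy of $f$.

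Next I would collect terms by recording, for each $s\ge 0$, the number $a_s:=\#\{\,l:i_l=s\,\}$ of factors carrying the $s$-th derivative. Any tuple $(i_1,\dots,i_j)$ appearing above satisfies $\sum_{s\ge 0}a_s=j$ and $\sum_{s\ge 0}s\,a_s=m$, and its product equals $f^{a_0}\prod_{s\ge 1}\bigl(H^{(s)}(f)\bigr)^{a_s}$, which depends only on the profile $(a_0,a_1,\dots,a_m)$. The number of ordered tuples realizing a prescribed profile is the multinomial coefficient $\tfrac{j!}{a_0!\,a_1!\cdots a_m!}$. Setting $k:=\sum_{s\ge 1}a_s=j-a_0$ for the number of nonzero parts, so that $a_0=j-k$ and $\tfrac{j!}{a_0!}=j(j-1)\cdots(j-k+1)$, and relabelling $a_s$ as $i_s$ for $1\le s\le m$, the profile constraints become $i_1+\cdots+i_m=k$ and $i_1+2i_2+\cdots+mi_m=m$. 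Substituting these identities turns the regrouped sum into exactly the asserted expression, with the factor $f^{j-k}$ arising from the $a_0=j-k$ undifferentiated factors.

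Finally I would justify the stated summation range $1\le k\le j$. Since $m\ge 1$, at least one part is nonzero, forcing $k\ge 1$; and $k$ cannot exceed the total number $j$ of factors. Terms with $k>m$ are automatically empty, because the constraints would give $i_1+2i_2+\cdots+mi_m\ge i_1+\cdots+i_m=k>m$, a contradiction; hence extending the outer index up to $j$ is harmless. The only step needing care is this regrouping-and-counting argument: one must check that the multinomial coefficient correctly enumerates the ordered tuples of each profile and that the power $f^{j-k}$ is produced precisely by the $a_0$ factors equal to $f$. Beyond this bookkeeping, the derivation is a direct consequence of the product rule.
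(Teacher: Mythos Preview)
Your argument is correct: applying the product rule (Lemma~\ref{productrule2}) with $f_1=\cdots=f_j=f$ and then regrouping the ordered tuples $(i_1,\dots,i_j)$ by their multiplicity profile $(a_0,a_1,\dots,a_m)$ yields exactly the stated formula, the multinomial count $\tfrac{j!}{a_0!\,a_1!\cdots a_m!}$ being an integer that makes sense in any characteristic. Note, however, that the paper does not supply its own proof of this lemma; it is quoted as a known result with references to Teichm\"uller and Jeong--Kim--Son, so there is no in-paper argument to compare against. Your derivation is the standard one and matches the approach taken in the cited source \cite{JKS}.
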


\section{Proofs of the main results}

In this section, we shall proof  our main results which have been introduced in Section \ref{Sec.2}.

\begin{lemma}\label{prp0h-r}
For $m\geq 1$, we have
$$\sum_{i_{\ell+1}=0}^m \frac{AC_{i_{\ell+1}}^{(\ell)}}{\Pi(i_{\ell+1})}
\sum_{i_1+\cdots+i_\ell=m-i_{\ell+1}\atop i_1,\dots,i_\ell\ge 0}{\lambda_{i_{1}}\cdots \lambda_{i_{\ell}}}=0.$$
\end{lemma}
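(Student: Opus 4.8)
The plan is to exploit the tautology $(f(z))^{\ell}\cdot\frac{1}{(f(z))^{\ell}}=1$ in $k[[z]]$ and extract the coefficient of $z^{m}$ from both sides. Write $g(z)=\frac{1}{(f(z))^{\ell}}$, which is a genuine element of $k[[z]]$ since $\lambda_{0}=f(0)=1/S(0)=1/(AC_{0}^{(\ell)}/\Pi(0))\neq 0$. By (\ref{def-hapnf}) the coefficient of $z^{i}$ in $g$ is $AC_{i}^{(\ell)}/\Pi(i)$, while by (\ref{def-f(z)}) the coefficient of $z^{i}$ in $f$ is $\lambda_{i}$. I would then apply the Hasse-Teichm\"uller derivative $H^{(m)}$ to the identity $f(z)\cdots f(z)\,g(z)=1$, regarded as a product of $k=\ell+1$ factors (namely $f_{1}=\cdots=f_{\ell}=f$ and $f_{\ell+1}=g$), and invoke the product rule of Lemma \ref{productrule2} to get
$$H^{(m)}(1)=\sum_{i_{1}+\cdots+i_{\ell}+i_{\ell+1}=m\atop i_{1},\dots,i_{\ell+1}\ge 0}H^{(i_{1})}(f)\cdots H^{(i_{\ell})}(f)\,H^{(i_{\ell+1})}(g).$$

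Next I would evaluate both sides at $z=0$. Since $m\ge 1$ we have $H^{(m)}(1)=0$, and from the very definition (\ref{HT}) of $H^{(i)}$ as the operator extracting the coefficient of $z^{i}$ we get $H^{(i)}(f)\big|_{z=0}=\lambda_{i}$ and $H^{(i_{\ell+1})}(g)\big|_{z=0}=AC_{i_{\ell+1}}^{(\ell)}/\Pi(i_{\ell+1})$. This gives
$$0=\sum_{i_{1}+\cdots+i_{\ell+1}=m\atop i_{1},\dots,i_{\ell+1}\ge 0}\lambda_{i_{1}}\cdots\lambda_{i_{\ell}}\,\frac{AC_{i_{\ell+1}}^{(\ell)}}{\Pi(i_{\ell+1})}.$$
Finally I would reorganize the sum by first fixing $i_{\ell+1}\in\{0,1,\dots,m\}$ and then summing over all $(i_{1},\dots,i_{\ell})$ with $i_{1}+\cdots+i_{\ell}=m-i_{\ell+1}$ and $i_{j}\ge 0$; this is precisely the asserted identity (and the inner sum is exactly $D_{\ell}(m-i_{\ell+1})$ in the notation of (\ref{D-ell})).

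I do not expect any real obstacle here: the computation is a routine Cauchy-product manipulation, and the one point worth stating carefully is simply that everything takes place in the formal Laurent series field $k((z))$, so there is no convergence issue and the $(\ell+1)$-fold product rule of Lemma \ref{productrule2} applies verbatim. As an equivalent alternative that avoids mentioning $H^{(m)}$ altogether, one may expand $(f(z))^{\ell}=\sum_{e\ge 0}D_{\ell}(e)z^{e}$ directly from the Cauchy product defining (\ref{D-ell}), multiply it against $g(z)=\sum_{n\ge 0}\frac{AC_{n}^{(\ell)}}{\Pi(n)}z^{n}$, and compare the coefficient of $z^{m}$ on both sides of $(f(z))^{\ell}g(z)=1$; for $m\ge 1$ this coefficient vanishes, which is the claim. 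This lemma is then exactly what feeds the recurrence of Theorem \ref{Theorem1} once one isolates the $i_{\ell+1}=m$ term, using $D_{\ell}(0)=\lambda_{0}^{\ell}=1$.
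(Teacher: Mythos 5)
Your proposal is correct and follows essentially the same route as the paper: apply $H^{(m)}$ to $(f(z))^{\ell}(S(z))^{\ell}=1$, use the product rule of Lemma \ref{productrule2} with the $\ell$ copies of $f$ and one copy of $(S(z))^{\ell}$, and evaluate at $z=0$. The coefficient-comparison alternative you mention is an equivalent reformulation of the same computation.
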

\begin{proof}
Put $f(z)=\frac1{S(z)}.$
From (\ref{def-hapn}) and (\ref{def-f(z)}), we have
\begin{equation}\label{pf-m}
1=\left(f(z)\right)^\ell\left(S(z)\right)^\ell\\
=\left(\sum_{n=0}^\infty\lambda_n z^n\right)^\ell
\left( \sum_{n=0}^\infty\frac{AC_n^{(\ell)}}{\Pi(n)}z^n\right).
\end{equation}
Applying the Hasse-Teichm\"uller derivative $H^{(m)}$ of order $m$ to (\ref{pf-m}), we have
\begin{equation}\label{pf-m-1}
H^{(m)}\left.\left[\left(f(z)\right)^\ell
\left(S(z)\right)^\ell\right]\right|_{z=0}
=\left.H^{(m)}(1)\right|_{z=0}=0.
\end{equation}
Note that for $j=1,2,\ldots,\ell$, by the definition of the  Hasse-Teichm\"uller derivative (\ref{HT}), we have
$$\begin{aligned}
\left.H^{(i_j)}(f(z))\right|_{z=0}&=\left.H^{(i_j)}\left(\sum_{n=0}^\infty\lambda_n z^n\right)\right|_{z=0} \\
&=\left.\sum_{n=i_j}^{\infty}\lambda_n\binom{n}{i_j}z^{n-i_j}\right|_{z=0} \\
&=\lambda_{i_j}
\end{aligned}$$
and
$$\begin{aligned}
\left.H^{(i_{\ell+1})}\left[\left(S(z)\right)^{\ell}\right]\right|_{z=0}&=\left.H^{(i_{\ell+1})}\left(\sum_{n=0}^\infty\frac{AC_n^{(\ell)}}{\Pi(n)}z^n\right)\right|_{z=0} \\
&=\left.\sum_{n=i_{\ell+1}}^{\infty}\frac{AC_n^{(\ell)}}{\Pi(n)}\binom{n}{i_{\ell+1}}z^{n-i_{\ell+1}}\right|_{z=0} \\
&=\frac{AC_{i_{\ell+1}}^{(\ell)}}{\Pi(i_{\ell+1})}.
\end{aligned}$$
Then by Lemma \ref{productrule2}, we have
\begin{equation}\label{pf-m-2}
\begin{aligned}
&H^{(m)}\left.\left[\left(f(z)\right)^\ell
\left(S(z)\right)^{\ell}\right]\right|_{z=0}\\
&=\sum_{i_1+\cdots+i_{\ell+1}=m\atop i_1,\dots,i_k\ge 0}\left.
H^{(i_1)}\left(f(z)\right)\right|_{z=0}\cdots \left.H^{(i_\ell)}\left(f(z)\right)\right|_{z=0} \left.H^{(i_{\ell+1})}\left[\left(S(z)\right)^{\ell}\right]\right|_{z=0}\\
&=\sum_{i_1+\cdots+i_{\ell+1}=m\atop i_1,\dots,i_k\ge 0}
\lambda_{i_1}\cdots\lambda_{i_\ell}\frac{AC_{i_{\ell+1}}^{(\ell)}}{\Pi(i_{\ell+1})}.
\end{aligned}
\end{equation}
Comparing with (\ref{pf-m-1}), we get
\begin{equation}
\sum_{i_1+\cdots+i_{\ell+1}=m\atop i_1,\dots,i_k\ge 0}\lambda_{i_1}\cdots\lambda_{i_\ell}\frac{AC_{i_{\ell+1}}^{(\ell)}}{\Pi(i_{\ell+1})}
 =0,
\end{equation}
which is the desired formula.\end{proof}

\begin{proof}[\bf Proof of Theorem \ref{Theorem1}.]
From Lemma \ref{prp0h-r}, we have
$$AC_{m}^{(\ell)}=-\Pi(m)\sum_{i=0}^{m-1}\frac{AC_{i}^{(\ell)}}{\Pi(i)}D_\ell(m-i)$$
with $AC_{0}^{(\ell)}=1,$ where
\begin{equation}
D_\ell(e)=\sum_{i_1+\cdots+i_\ell=e\atop i_1,\dots,i_\ell\ge 0}\lambda_{i_1}\cdots\lambda_{i_\ell},
\end{equation}
which is Theorem \ref{Theorem1}.
\end{proof}

\begin{proof}[\bf Proof of Theorem \ref{Theorem2}.]
Denote by \begin{equation}\label{denoteby}
h(z)=\bigl(f(z)\bigr)^\ell,
\end{equation}
 where
$$f(z)=\sum_{n=0}^\infty\lambda_n z^n.$$
Since by (\ref{HT}), the definition of the  Hasse-Teichm\"uller derivative, we have
\begin{align*}
\left.H^{(i)}(f(z))\right|_{z=0}&=\left.\sum_{n=i}^\infty \lambda_n\binom{n}{i}z^{n-i}\right|_{z=0}\\
&=\lambda_i.
\end{align*}
Then applying  the product rule of the Hasse-Teichm\"uller derivative in Lemma \ref{productrule2}, we get
\begin{equation}\label{h-ell}
\begin{aligned}
\left.H^{(e)}(h(z))\right|_{z=0}&=\sum_{i_1+\cdots+i_\ell=e\atop i_1,\dots,i_\ell\ge0}
\left.H^{(i_1)}(f(z))\right|_{z=0}\cdots\left.H^{(i_\ell)}(f(z))\right|_{z=0}\\
&=\sum_{i_1+\cdots+i_\ell=e\atop i_1,\dots,i_\ell\ge 0}\lambda_{i_{1}}\cdots\lambda_{i_{\ell}} \\
&:=D_\ell(e).
\end{aligned}
\end{equation}
Since by (\ref{denoteby}) and  (\ref{def-hapnf}) \begin{equation*}
\frac{1}{h(z)}=\frac{1}{(f(z))^\ell}=\sum_{n=0}^\infty\frac{AC_n^{(\ell)}}{\Pi(n)}z^n,
\end{equation*}
we have
\begin{equation}\label{above1}
H^{(m)}\left.\left(\frac{1}{h(z)}\right)\right|_{z=0}
=H^{(m)}\left.\left( \sum_{n=0}^\infty\frac{AC_n^{(\ell)}}{\Pi(n)}z^n \right)\right|_{z=0}=\frac{AC_m^{(\ell)}}{\Pi(m)}.\end{equation}
And from  (\ref{quotientrule1}), the quotient rule of the Hasse-Teichm\"uller derivative, and (\ref{h-ell}), we get
\begin{equation}\label{above2}\begin{aligned}
H^{(m)}\left.\left(\frac{1}{h(z)}\right)\right|_{z=0}
&=\sum_{k=1}^m\left.\frac{(-1)^k}{h^{k+1}}\right|_{z=0}
\sum_{e_1+\cdots+e_k=m\atop e_1,\dots,e_k\ge 1}\left.H^{(e_1)}(h(z))\right|_{z=0}\cdots\left.H^{(e_k)}(h(z))\right|_{z=0}\\
&=\sum_{k=1}^m(-1)^{k}\sum_{e_1+\cdots+e_k=m\atop e_1,\dots,e_k\ge1}D_\ell(e_1)\cdots D_\ell(e_k).
\end{aligned}\end{equation}
Comparing (\ref{above1}) and (\ref{above2}) we get the desired formula.
\end{proof}

\begin{proof}[\bf Proof of Theorem \ref{Theorem3}.]
From the  geometric series expansion, we have
\begin{eqnarray}\label{geometric2}
S(z)=\frac{1}{ 1+ (f(z)-1)} =\sum_{j=0}^{\infty}(-1)^j (f(z)-1)^j.
\end{eqnarray}
And by (\ref{def-Sz}) and the definition of the  Hasse-Teichm\"uller derivative (\ref{HT}), we get
$$H^{(m)}\left.(S(z))\right|_{z=0}
=H^{(m)}\left.\left( \sum_{n=0}^\infty\frac{AC_n}{\Pi(n)}z^n \right)\right|_{z=0}=\frac{AC_m}{\Pi(m)}.$$
Then applying  the Hasse-Teichm\"{u}ller derivative $H^{(m)}$ of order $m\geq1 $ to both sides of
(\ref{geometric2}), we get
\begin{equation}\label{proof1}
\frac{AC_m}{\Pi(m)} =\sum_{j=1}^{\infty} (-1)^j H^{(m)}(g^j)|_{z=0},
\end{equation}
where
\begin{equation}\label{g} g:=f(z)-1=\sum_{i=1}^{\infty}\lambda_iz^i.\end{equation}
Lemma \ref{quotientrule3} yields
$$\begin{aligned}
H^{(m)}(g^j)|_{z=0}&=\sum_{k=1}^{j}g^{j-k}
\sum_{\substack{ i_1,\ldots, i_m \geq 0 \\ i_1 +\cdots+ i_m =k \\ i_1 +2i_2 + \cdots+ mi_m =m } }
 \frac{j(j-1) \cdots (j-k+1)}{i_1 ! \cdots i_m !} \\
&\quad\times
  \left.(H^{(1)}(g))^{i_1}  \cdots (H^{(m)}(g))^{i_m}\right|_{z=0}.
\end{aligned}$$
By (\ref{g}), we have $g(0)=0$  and $g^{j-k}|_{z=0}=0$ if $j\neq k$, thus the right hand side of the above equality equals to
$$\sum_{\substack{ i_1, \ldots, i_m \geq 0 \\
i_1 +\cdots+ i_m =j \\
i_1 +2i_2 + \cdots+ mi_m =m } }
 \frac{j!}{i_1 ! \cdots  i_m!}
 \lambda_1^{i_1}  \lambda_2^{i_2}  \cdots \lambda_m^{i_m}.$$
Substituting to (\ref{proof1}) and also noticing  that for $j>m, $ the summation index of  the above sum
 becomes empty thus the sum equals to 0,
we get the desired formula.
\end{proof}

\begin{proof}[\bf Proof of Theorem \ref{Theorem4}.]
At this stage, we show that the proof of \cite[Theorem 2]{HKo}
which based on the inductive method can also be applied to our situation.

Denote by $A_{m}^{(\ell)}=\frac{(-1)^m AC_{m}^{(\ell)}}{\Pi(m)}.$ Then, we shall prove that for any $m\ge 1$
\begin{equation}
A_{m}^{(\ell)}=\left|
\begin{array}{ccccc}
D_\ell(1)&1&&&\\
D_\ell(2)&D_\ell(1)&&&\\
\vdots&\vdots&\ddots&1&\\
D_\ell(n-1)&D_\ell(n-2)&\cdots&D_\ell(1)&1\\
D_\ell(n)&D_\ell(n-1)&\cdots&D_\ell(2)&D_\ell(1)
\end{array}
\right|.
\label{aNnr}
\end{equation}
When $m=1$, (\ref{aNnr}) is valid, because  by Corollary \ref{Corollary2} we have
$$
AC_1=(-1)\Pi(1)D_\ell(1).
$$
Assume that (\ref{aNnr}) is valid up to $m-1$. Notice that
by  Corollary \ref{Corollary1}, we have
$$A_{m}^{(\ell)}=\sum_{i=1}^{m}(-1)^{i-1}A_{m-i}^{(\ell)}D_\ell(i).$$
Thus, by expanding the first row of the right-hand side (\ref{aNnr}), it is equal to
\begin{align*}
&D_\ell(1)A_{m-1}^{(\ell)}-\left|
\begin{array}{ccccc}
D_\ell(2)&1&&&\\
D_\ell(3)&D_\ell(1)&&&\\
\vdots&\vdots&\ddots&1&\\
D_\ell(m-1)&D_\ell(m-3)&\cdots&D_\ell(1)&1\\
D_\ell(m)&D_\ell(m-2)&\cdots&D_\ell(2)&D_\ell(1)
\end{array}
\right|\\
&=D_\ell(1)A_{m-1}^{(\ell)}-D_\ell(2)A_{m-2}^{(\ell)}\\
&\qquad\qquad\;\; +\left|
\begin{array}{ccccc}
D_\ell(3)&1&&&\\
D_\ell(4)&D_\ell(1)&&&\\
\vdots&\vdots&\ddots&1&\\
D_\ell(m-1)&D_\ell(m-4)&\cdots&D_\ell(1)&1\\
D_\ell(m)&D_\ell(m-3)&\cdots&D_\ell(2)&D_\ell(1)
\end{array}
\right|\\
&=D_\ell(1)A_{m-1}^{(\ell)}-D_\ell(2)A_{m-2}^{(\ell)}+\cdots
+(-1)^{m-2}\left|
\begin{array}{cc}
D_\ell(m-1)&1\\
D_\ell(m)&D_\ell(1)
\end{array}
\right|\\
&=\sum_{i=1}^m(-1)^{i-1}D_\ell(i)A_{m-i}^{(\ell)}=A_{m}^{(\ell)}.
\end{align*}
Note that $A_{1}^{(\ell)}=D_\ell(1)$ and $A_{0}^{(\ell)}=1$.
\end{proof}

\section*{Acknowledgement} The authors are enormously grateful to the anonymous referee for his/her very careful
reading of this paper, and for his/her many valuable and detailed suggestions. 
Su Hu is supported by Guangdong Basic and Applied Basic Research Foundation (No. 2020A1515010170).  Min-Soo Kim is supported by the National Research Foundation of Korea(NRF) grant funded by the Korea government(MSIT) (No. 2019R1F1A1062499).

\end{document}